\numberwithin{equation}{section}
\newtheorem{thm}{Theorem}[section]
\newtheorem{prop}[thm]{Proposition}
\newtheorem{defn}[thm]{Definition}
\newtheorem{lemma}[thm]{Lemma}
\newtheorem{corol}[thm]{Corollary}
\newtheorem{ipo}[thm]{Assumption}
\theoremstyle{definition}
\newtheorem{remark}[thm]{Remark}
\newcommand{\nn}{\mathbb{N}}
\newcommand{\rr}{\mathbb{R}}
\newcommand{\ppp}{\mathbb{P}_{tpz}}
\newcommand{\eee}{\mathbb{E}_{tpz}}
\newcommand{\dd}{\mathcal{D}}
\newcommand{\aaa}{\mathcal{A}_{t}}
\newcommand{\aav}{\mathcal{A}_{tz}^\textup{adm}}
\newcommand{\eps}{\varepsilon}
\title{\textbf{Optimal exercise of swing contracts\\ in energy markets: an integral constrained\\ stochastic optimal control problem}}
\author{Matteo Basei\footnote{Department of Mathematics, University of Padova, via Trieste 63, 35121 Padova, Italy. Emails: basei@math.unipd.it, acesar@math.unipd.it, vargiolu@math.unipd.it.} \qquad\, Annalisa Cesaroni\footnotemark[1] \qquad\, Tiziano Vargiolu\footnotemark[1]}
\date{\empty}
\begin{document}

\maketitle

\vspace{0.4cm}
\begin{abstract}
We characterize the value of swing contracts in continuous time as the unique viscosity solution of a Hamilton-Jacobi-Bellman equation with suitable boundary conditions. The case of contracts with penalties is straightforward, and in that case only a terminal condition is needed. Conversely, the case of contracts with strict constraints gives rise to a stochastic control problem with a nonstandard state constraint. We approach this
problem by a penalty method: we consider a general constrained problem and approximate the value function with a sequence of value functions of appropriate unconstrained problems with a penalization term in the objective functional. Coming back to the case of swing contracts with strict constraints, we finally characterize the value function as the unique viscosity solution with polynomial growth of the HJB equation subject to appropriate boundary conditions.
\end{abstract}

\vspace{0.4cm}
\noindent {\bf Keywords:} swing contracts, energy markets, stochastic control, state constraints, penalty methods, dynamic programming, HJB equation, viscosity solutions.
\vspace{0.7cm}

\section{Introduction}

Energy is traded in financial markets, in its various forms (electricity, coal, gas, oil, etc.), mainly through two types of contracts, namely forwards and swings. Forward contracts are obligations between two parts to exchange some amount of energy, in a specified form (electricity or some fuel) and for a prespecified amount of money: once settled, this contract is strictly binding for both the parts, giving no flexibility to them. Conversely, swing contracts give a certain amount of flexibility to the buyer, while also giving the seller a certain guarantee that a minimum quantity of energy will be bought. This is due to the fact that energy storage is costly in the case of fuels and almost impossible in the case of electricity; moreover, energy markets are influenced by many elements (peaks in consumes related to sudden weather changes, breakdowns in power plants, financial crises, etc.). As a consequence, the price of energy is subject to remarkable fluctuations, so that flexibility is much welcomed by contract buyers.


The flexibility in swing contracts is implemented in this way (we here follow the approach in \cite{BLN} and model the contract in continuous time): for a fixed contract maturity $T$ (usually one or several years), the buyer can choose, at each time $s \in [0,T]$, to buy a marginal amount of energy $u(s) \in [0,\bar u]$ at a prespecified strike price $K$, thus realizing a marginal profit (or loss) equal to $u(s) (P(s) - K)$, where $P(s)$ is the spot price of that kind of energy. This gives to the buyer the potential profit (or loss)
\begin{equation*}
\int_0^T e^{-rs} u(s) (P(s) - K)\ ds, 
\end{equation*}
with $r > 0$ the risk-free interest rate.

However, the energy seller usually wants the total amount of energy $Z(T) = \int_0^T u(s)\ ds $ to lie between a minimum and a maximum quantity, that is $Z(T) \in [m,M]$. This is implemented in two main ways. The first way is to impose penalties when $Z(T) \notin [m,M]$, i.e.~to make the buyer pay a penalty $\widetilde \Phi(P(T),Z(T))$, where $\widetilde \Phi(p,z)$ is a contractually fixed function, null for $z \in [m,M]$ and usually convex in $z$. The second way is to impose the constraint $Z(T) \in [m,M]$ to be satisfied strictly, i.e.~to force the buyer to withdraw the minimum cumulative amount of energy $m$ and to stop giving the energy when the maximum $M$ has been reached.

We are interested in the problem of optimally exercising a swing option, in both the cases. This problem can be modelled as a continuous time stochastic control problem: our aim is to study the corresponding value function and to characterize it as the unique viscosity solution of the related Hamilton-Jacobi-Bellman (HJB) equation.   In doing this, we feel we are filling a gap in literature, where swing contracts are treated either in discrete time \cite{BBP,Barrera,EFRV,JRT} via the dynamic programming principle and Bellman equations or in continuous time \cite{BLN,BLN2} only by reporting a verification theorem for a smooth solution of the HJB equation, without reporting existence or uniqueness results for that. Besides, we also extend the approach in \cite{BLN,BLN2}, which only treat the case $m = 0$, to the case when $m > 0$, which is the most relevant case in practical applications (in fact, \cite{Loland} reports that typically $m \in [0.8M,M]$). We must also cite \cite{ct}, where swing contracts in continuous time are treated in continuous time with multiple stopping techniques, but for the actual computation for swing contracts in finite time reduce the problem to a discrete time approximation. 

In the case of swing contracts with penalties, we get a standard stochastic control problem, as the maximization of the final expected payoff for a buyer entering in the contract at a generic time $t \in [0,T]$ is given by
\begin{equation}
\widetilde V(t,p,z)= \sup_{u \in \aaa} \eee \left[\int_t^T e^{-rs}(P(s)-K)u(s)\ ds - e^{-rT} \widetilde\Phi(P(T),Z(T))\right], 
\end{equation}
with $(t,p,z) \in [0,T]\times \rr^2$, where $\aaa$ is the set of $[0, \bar u]$-valued progressively measurable processes $u=\{u(s)\}_{s\in[t,T]}$. Thus, in this case classical theory (see \cite{DaLioLey2,EV,FS}) can be applied: see Section \ref{CAPopzioniswingpenal}.

Conversely, swing contracts with strict constraints give rise to a stochastic control problem with a nonstandard state constraint:
\begin{equation}
\label{intro2}
V(t,p,z) = \sup_{u \in \aav}  \eee \biggr[ \int_t^T e^{-r(s-t)} (P(s) - K) u(s)\ ds \biggr],
\end{equation}
with $(t,p,z)$ in a suitable domain $\dd \subseteq [0,T]\times \rr^2$, where $\aav$ is the set of processes $u \in \aaa$ such that $\ppp\text{-a.s.}$ $Z(T)\in[m,M]$. Due to the presence of the constraint on $Z^{t,z;u}(T)$, here classical theory does not apply.

This motivated us to consider in Section \ref{CAPcontrottvinc} a more general class of integral constrained stochastic problems in the form 
\begin{equation}
\label{intro3}
V(t,p,z)= \sup_{u \in \aav}  \eee \biggr[ \int_t^T e^{-r(s-t)}L(s,P(s),Z(s),u(s))ds \\
+ e^{-r(T-t)}\Phi(P(T),Z(T))\biggr].
\end{equation}
of which Equation (\ref{intro2}) is a particular case. 
For precise definitions and assumptions we refer to Section \ref{COV}; in particular, we ask a technical hypothesis (Assumption \ref{assrho}) to hold. Since a straightforward modification of standard proofs is not possibile, we follow a different approach: we show that in a set $\widetilde \dd \subseteq \dd$ the function $V$ in Equation (\ref{intro3}) is the limit of the value functions $V^c$ of suitable unconstrained problems, where the constraint has been substituted by an appropriate penalization in the objective functional.

More in details, the main result is Theorem \ref{thconv1}: in each set $\dd^\rho$ (defined in Section \ref{COV}) $V$ is locally uniformly the limit of the functions $V^c$ in (\ref{Vc}). It follows that on $\bigcup_\rho \dd^\rho$ the value function is continuous (Corollary \ref{thconv2}). In Propositions \ref{PrRegVinc1} and \ref{PrRegVinc2} we prove that, under suitable assumptions, the function $V(t,\cdot,z)$ is Lipschitz continuous and a.e.~twice differentiable. In Section \ref{COVesempi} we show that Assumption \ref{assrho} is satisfied in the cases $g(s,v)=v$ and $g(s,v)=|v|^p$ ($p\geq1$), if $f$ and $\sigma$ satisfy an appropriate condition.

In Section \ref{CAPopzioniswing} we apply these general results to the problem in (\ref{intro2}). In this case stronger results will be achieved, since it can be proved directly that the value function is continuous not only in $\widetilde \dd \subsetneq \dd$ but in the whole domain $\dd$ (Proposition \ref{PrStrict1} and \ref{PrStrict2}). Thus, $V$ can be characterized as the unique viscosity solution with polynomial growth of the HJB equation under suitable boundary conditions (Theorem \ref{PrStrict3}). As for the regularity of the value function, besides the above cited general results about the variable $p$ (Proposition \ref{PrStrict4}), we prove that $V(t,p,\cdot)$ is concave and study its monotonicity (Proposition \ref{PrStrict5}).

Control problems with integral constraints are classical in control theory, for instance they naturally arise in applications: e.g.~control problems with bounded $L^p$ norm of the controls, control problems with prescribed bounded total variation or total energy of the trajectories, control systems with design uncertainties. However, the dynamic programming approach presents several technical difficulties. The main one relies on the fact that the dynamic programming principle is not satisfied directly by the value function and the problem has to be attacked differently. As for the case of deterministic systems, we refer to \cite{MS2,so} and references therein. As for the case of stochastic controls, the upper bound $Z(T) \leq M$ is analogous to the constraint of the so-called finite fuel problems, which are optimal control problems with an upper bound on the integral of the absolute value of the controls (see e.g.~\cite[Chapter VIII]{FS} for an introduction to the problem, \cite{MS} and references therein). Instead, the lower bound $Z(T) \geq m$ is nonstandard. In the particular case of Equation (\ref{intro2}), and only with $m = 0$, such a bound has been studied (treated in \cite{BLN} and generalized in \cite{BLN2}, still with $m = 0$). However, we already said that this case is quite unrealistic, as the seller wants to be sure to sell some amount of energy, so typically $m > 0$ (\cite{Loland} reports typical values of $m \in [0.8 M,M]$ for real contracts). To the best of our knowledge stochastic optimal control problems with general integral constraints on the controls have not yet been studied.

The structure of the paper is as follows. In Section \ref{CAPopzioniswingpenal} the evaluation problem for a swing contract with penalty is studied. Section \ref{CAPcontrottvinc} deals with a general class of constrained control problems, as in Equation (\ref{intro3}). Finally, in Section \ref{CAPopzioniswing} we deeply analyze the problem, outlined in (\ref{intro2}), of the optimal exercise of swing contracts with strict constraints.

\textit{Notations}. By $\| \cdot \|$ we denote the sup-norm. If $B \in M_{ij}(\rr)$ (i.e.~a real $i \times j$ matrix), $B^t$ denotes the transpose of $B$ and $\textup{tr}(B)$ denotes its trace. By $\overline{\text{B}(x,R)}$ we mean the closed ball in $\rr^n$ with center $x$ and radius $R$. If $O \subseteq \rr^n$ and $k \in \nn$, we denote by $C^k_b(O)$ (resp. $C^k_p(O)$) the set of functions of class $C^k(O)$ whose derivatives up to order $k$ are bounded (resp. are polynomially growing). If $\psi$ is a function from $(t,p,z) \in A \subseteq \rr \times \rr^n \times \rr$ to $\rr$, by $\psi_t,\psi_z$ we mean the derivatives with respect to $t$ and $z$ and by $D_p \psi, D_p^2 \psi$ we mean the Jacobian and the Hessian matrix with respect to the variable $p$.

\section{Swing contracts with penalties}
\label{CAPopzioniswingpenal}

In this section we consider the problem of the optimal exercise of swing contracts with penalties described in the Introduction: to this purpose, we formalize  a continuous time model to which we apply classical results in stochastic control.

Let $T>0$ and fix a filtered probability space ($\Omega$, $\mathcal{F}_T$, $\{\mathcal{F}_s\}_{s\in[0,T]}$, $\mathbb{P}$) and a real $\{\mathcal{F}_s\}_s$-adapted Brownian motion $W=\{W(s)\}_{s \in [0,T]}$. Let $t \in [0,T]$ and $p \geq 0$. We model the price of energy through a stochastic process $\{P^{t,p}(s)\}_{s \in [t,T]}$ which satisfies the SDE
\begin{equation}
\label{Pcasopartic}
dP^{t,p}(s)=f(s,P^{t,p}(s))ds + \sigma(s,P^{t,p}(s))dW(s),    \qquad s\in[t,T],
\end{equation}
with initial condition $P^{t,p}(t)=p$, where $f,\sigma \in C([0,T]\times \rr; \rr)$ are Lipschitz continuous with respect to the second variable uniformly in the first variable (it is well-known that this condition ensures the existence of a pathwise unique strong solution of the SDE).

In each $s\in[t,T]$, the holder can buy energy at a fixed unitary price $K>0$ and with purchase intensity $u(s) \in[0,\bar u]$, where $\bar u>0$ is a constant: this gives a net instantaneous profit (or loss) of $(P^{t,p}(s)-K)u(s)$. Let $\aaa$ be the set of all $[0,\bar u]$-valued progressively measurable processes $u=\{u(s)\}_{s \in [t,T]}$ (i.e.~all the possible usage strategies of the contract). Let $z$ be the amount of energy purchased until time $t$ and let $u \in \aaa$ be an exercise strategy from time $t$ on; for each $s \in [t,T]$ we denote by $Z^{t,z;u}(s)$ the energy bought up to time $s$:
\begin{equation*}
Z^{t,z;u}(s) =z+\int_t^s u(\tau)d\tau, \qquad s \in [t,T].
\end{equation*}

If the globally purchased energy $Z^{t,z;u}(T)$ does not fall within a fixed range $[m,M]$ ($m,M \in \rr$, with $m \leq M$), the holder must pay a penalty $\widetilde\Phi(P^{t,p}(T),Z^{t,z;u}(T))$, where $\widetilde \Phi$ is a function from $\rr^2$ to $\rr$. In the typical case (see, for example, \cite{BBP,Barrera,EV}) the penalty is directly proportional to $P^{t,p}(T)^+$ and to the entity of the overrunning or underrunning: this is obtained by setting
\begin{equation*}
\widetilde\Phi(p,z) = - A p^+ (z-M)^+ - B p^+ (m-z)^+,
\end{equation*}
for all $(p,z)\in \rr^2$, where $A, B >0$ are suitable constants. In several practical cases, $A = B$. However, other kind of penalties are possible (see e.g.~\cite{JRT}): typically $p^+$, representing the spot price at the end $T$ of the contract, is replaced either by an arithmetic mean of spot prices (thus requiring another state variable in the problem) or by a fixed (high) penalty. In the light of the above discussion, we assume that $\widetilde \Phi$ is null for $z \in [m,M]$, globally concave in $z$ and such that
$$ | \widetilde\Phi(p + h,z) - \widetilde\Phi(p,z) | \leq C h (1+|z|), \,\,\,  | \widetilde\Phi(p,z + h) - \widetilde\Phi(p,z) | \leq C h (1+|p|), \,\,\, \forall (p,z) \in \rr^2, h>0, $$
where $C>0$ is a constant.

Let $r\geq 0$ be the risk-free rate. We get a stochastic optimal control problem, with the following value function:
\begin{equation}
\label{penalita}
\widetilde V(t,p,z)= \sup_{u \in \aaa} \widetilde J(t,p,z;u),
\end{equation}
for each $(t,p,z)\in [0,T]\times \rr^2$, where
\begin{equation*}
\widetilde J(t,p,z;u)=\eee\biggl[\int_t^T e^{-r(s-t)}(P^{t,p}(s)-K)u(s) ds + e^{-r(T-t)}\widetilde\Phi(P^{t,p}(T),Z^{t,z;u}(T))\biggr]
\end{equation*}
and by $\eee$ we denote the mean value with respect to the probability $\mathbb{P}$ (subscripts recall initial conditions).

Problem (\ref{penalita}) belongs to a widely studied class of control problems: by well-known classical results, summarized in Theorem \ref{PrPen1}, the value function is the unique viscosity solution of the corresponding Hamilton-Jacobi-Bellman equation subject to appropriate conditions (we refer to \cite{USER} for the definition of viscosity solutions).

\begin{thm}
\label{PrPen1}
The function $\widetilde V$ is the unique viscosity solution of
\begin{multline}
\label{HJBpenalt}
-\widetilde V_t(t,p,z) + r \widetilde V(t,p,z) - f(t,p) \widetilde V_p(t,p,z) - \frac{1}{2}\sigma^2(t,p) \widetilde V_{pp}(t,p,z) \\
+ \min_{v \in [0,\bar u]}[ -v (\widetilde V_z(t,p,z) + p-K) ]=0, \qquad \forall (t,p,z) \in [0,T[\times\rr^2,
\end{multline}
with final condition
\begin{equation}
\label{HJB2penalt}
\widetilde V(T,p,z)=\widetilde\Phi(p,z), \qquad \forall (p,z) \in \rr^{2},
\end{equation}
and such that
\begin{equation*}
|\widetilde V(t,p,z)| \leq \check C (1+|p|^2+|z|^2), \qquad \forall (t,p,z) \in [0,T] \times \rr^2,
\end{equation*}
for some constant $\check C >0$.
\end{thm}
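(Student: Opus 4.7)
The plan is to follow the standard dynamic programming route for problems with bounded controls and Lipschitz coefficients, invoking the classical results cited in the paper (\cite{DaLioLey2,EV,FS}). First I would check that $\widetilde V$ is well-defined and satisfies the quadratic growth bound. Standard moment estimates for the SDE (\ref{Pcasopartic}) under the Lipschitz hypotheses on $f,\sigma$ give $\eee[|P^{t,p}(s)|^2]\leq C(1+p^2)$ for all $s\in[t,T]$, while $|Z^{t,z;u}(s)|\leq |z|+\bar u T$ pathwise. The two Lipschitz-type assumptions on $\widetilde\Phi$, together with the fact that $\widetilde\Phi(p,z)=0$ for $z\in[m,M]$, imply by a telescoping estimate the pointwise bound $|\widetilde\Phi(p,z)|\leq C'(1+|p|^2+|z|^2)$; the running payoff $(P-K)u$ is clearly controlled by the same kind of bound. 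Assembling these estimates gives the asserted quadratic growth of $\widetilde V$.

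Next I would establish the dynamic programming principle: for every $(t,p,z)$ and every $\{\mathcal{F}_s\}$-stopping time $\tau$ with values in $[t,T]$,
\begin{equation*}
\widetilde V(t,p,z)=\sup_{u\in\aaa}\eee\!\left[\int_t^\tau e^{-r(s-t)}(P^{t,p}(s)-K)u(s)\,ds+e^{-r(\tau-t)}\widetilde V(\tau,P^{t,p}(\tau),Z^{t,z;u}(\tau))\right].
\end{equation*}
Given the standard structure of the problem, this is a direct application of the DPP of \cite{FS}. Continuity of $\widetilde V$ in $(p,z)$ follows from the Lipschitz dependence of $(P,Z)$ on the initial data together with the polynomial bounds on the data, and continuity in $t$ is then derived from the DPP applied with $\tau=t+h$. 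From the DPP the viscosity subsolution/supersolution property with respect to (\ref{HJBpenalt}) follows by the standard test-function argument based on It\^o's formula applied to a smooth $\varphi$ touching $\widetilde V$ from above or below at an interior point. The terminal condition (\ref{HJB2penalt}) is immediate from the definition of $\widetilde V$ combined with the continuity proved above.

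The main obstacle is uniqueness. Existence having been established, one must prove a comparison principle within the class of continuous functions with (at most) quadratic growth on $[0,T]\times\rr^2$. The strategy is the classical one of doubling the variables and penalizing with a positive weight of the form $\phi(t,p,z)=e^{\lambda t}(1+|p|^2+|z|^2)$ (with $\lambda$ large) to compensate for the unboundedness of the state space, then applying Ishii's lemma and exploiting the Lipschitz continuity of $f,\sigma$ together with the linearity of the Hamiltonian in the control $v\in[0,\bar u]$ to absorb the resulting first- and second-order error terms. This is exactly the setting addressed in \cite{DaLioLey2}, whose comparison theorem applies directly to our HJB equation; the only thing to verify is that our Hamiltonian meets their structural hypotheses (continuity, properness, modulus of continuity in $p$ with the right form), which is routine. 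Once comparison is in hand, any two solutions with quadratic growth sharing the terminal datum $\widetilde\Phi$ must coincide, giving uniqueness.
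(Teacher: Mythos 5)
Your proposal follows essentially the same route as the paper: the paper proves this theorem by deferring to Theorem~\ref{propVc} (of which it is a particular case), whose proof in turn cites \cite[Chapter IV]{FS} for the dynamic programming / viscosity-solution property and \cite[Thm.~3.1]{DaLioLey2} for uniqueness via a comparison principle in the class of functions with polynomial growth. You simply spell out the steps that the paper delegates to those references (moment estimates for the quadratic growth bound, DPP, test-function argument, doubling-of-variables comparison), so the approach and key ingredients match.
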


\begin{proof}
See Theorem \ref{propVc}, of which this theorem is a particular case.
\end{proof}

\begin{remark}
Since the solution of problem (\ref{HJBpenalt})-(\ref{HJB2penalt}) is unique, the value function $\widetilde V$ does not depend on the probability space chosen.
\end{remark}

We now list some properties of the function $\widetilde V$ with respect to the variables $p$ and $z$. Let us start by proving regularity results with respect to the variable $p$. 

\begin{prop}
\label{PrPen2}
For each $(t,z) \in [0,T] \times \rr$ the function $\widetilde V(t,\cdot, z)$ is Lipschitz continuous, uniformly in $t$. Moreover, the derivative $\widetilde V_p(t,p,z)$ exists for a.e.~$(t,p,z) \in [0,T] \times {\rr^2}$ and we have $|\widetilde V_p(t,p,z)|\leq M_1(1+|z|)$, for some constant $M_1>0$ depending only on $\bar u$, $T$, $C$ and on the constants in (\ref{fs}) and (\ref{cpolfs}).
\end{prop}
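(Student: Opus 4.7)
The plan is to prove Lipschitz continuity of $\widetilde V(t,\cdot,z)$ by the standard route of estimating $|\widetilde J(t,p_1,z;u) - \widetilde J(t,p_2,z;u)|$ uniformly in $u \in \aaa$ and then taking the supremum. The key input is the pathwise stability of the SDE (\ref{Pcasopartic}) with respect to its initial condition: since $f$ and $\sigma$ are Lipschitz in $p$ uniformly in $t$, Gronwall's lemma applied to $\eee|P^{t,p_1}(s)-P^{t,p_2}(s)|^2$ yields
\begin{equation*}
\sup_{s \in [t,T]} \eee \bigl| P^{t,p_1}(s) - P^{t,p_2}(s) \bigr| \;\leq\; K\, |p_1 - p_2|,
\end{equation*}
with $K$ depending only on $T$ and the Lipschitz constant in (\ref{fs})--(\ref{cpolfs}); crucially $K$ is independent of $t$, which is what will produce uniformity in time.

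Next I would split the payoff difference into two pieces. For the running cost, I would bound
$\bigl|\eee\int_t^T e^{-r(s-t)}(P^{t,p_1}(s)-P^{t,p_2}(s))u(s)\,ds\bigr|$
by $\bar u T K |p_1-p_2|$, using $u \in [0,\bar u]$ and the SDE estimate above. For the terminal cost, I would apply the assumed Lipschitz bound on $\widetilde\Phi$ in its first argument, namely $|\widetilde\Phi(p+h,z)-\widetilde\Phi(p,z)| \leq Ch(1+|z|)$, together with the deterministic bound $|Z^{t,z;u}(T)| \leq |z|+\bar u T$. This gives
\begin{equation*}
\eee \bigl| \widetilde\Phi(P^{t,p_1}(T),Z^{t,z;u}(T)) - \widetilde\Phi(P^{t,p_2}(T),Z^{t,z;u}(T)) \bigr|
\;\leq\; C\, K\, |p_1-p_2|\,(1 + |z| + \bar u T),
\end{equation*}
with no dependence on $u$. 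Adding the two pieces and taking the supremum over $u \in \aaa$ produces $|\widetilde V(t,p_1,z)-\widetilde V(t,p_2,z)| \leq M_1 |p_1-p_2|(1+|z|)$ for some $M_1$ depending only on $\bar u$, $T$, $C$ and the constants in (\ref{fs})--(\ref{cpolfs}), uniformly in $t \in [0,T]$.

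Finally, for the a.e.~existence of $\widetilde V_p$, I would invoke the classical fact that a Lipschitz function of one real variable is differentiable almost everywhere (Lebesgue/Rademacher), so that for each fixed $(t,z)$ the derivative $\widetilde V_p(t,\cdot,z)$ exists off a Lebesgue-null set of $p$'s, with $|\widetilde V_p(t,p,z)| \leq M_1(1+|z|)$ wherever it exists. Passing from ``for each $(t,z)$, a.e.~$p$'' to ``a.e.~$(t,p,z) \in [0,T]\times\rr^2$'' is immediate by Fubini. There is no real obstacle here; the only point requiring some care is keeping track of the $(1+|z|)$ factor — it enters only through the $\widetilde\Phi$ estimate, while the running-cost estimate gives a constant independent of $z$, so the final bound is linear in $|z|$ as claimed and no dependence on $|p|$ creeps in.
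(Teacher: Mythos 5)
Your proof is correct and follows essentially the same route as the paper's: estimate $|\widetilde J(t,p_1,z;u)-\widetilde J(t,p_2,z;u)|$ uniformly in $u$ via an $L^1$ stability bound on the flow of (\ref{Pcasopartic}) (the paper cites estimate (D.8) of \cite[Appendix~D]{FS}, which is the same Gronwall-type estimate you derive, just stated for $\eee\|P^{t,p_1}(\cdot)-P^{t,p_2}(\cdot)\|$ rather than $\sup_s\eee|\cdot|$), split the payoff into running and terminal parts, use the deterministic bound $|Z^{t,z;u}(T)|\le|z|+\bar u T$ to isolate the $(1+|z|)$ factor, take the supremum over $u$, and finish with Rademacher plus a Fubini argument for the a.e.\ statement in $(t,p,z)$.
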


\begin{proof}
Let $(t,p,z) \in [0,T]\times \rr^2$, $h>0$ and $u \in \aaa$. By estimate (D.8) in \cite[Appendix D]{FS} we have
\begin{align}
\label{stima1}
& |\widetilde J(t,p+h,z;u)- \widetilde J(t,p,z;u)| \nonumber \\
& \leq \eee\left[\int_t^T |P^{t,p+h}(s)-P^{t,p}(s)| \, |u(s)|ds +C|P^{t,p+h}(T)-P^{t,p}(T)| \, (1+|Z^{t,z;u}(T)|)\right] \nonumber \\
& \leq T \bar u \, \eee [\|P^{t,p+h}(\cdot)-P^{t,p}(\cdot)\|] + C\eee [\|P^{t,p+h}(\cdot)-P^{t,p}(\cdot)\|] (1+|z|+\bar u T) \nonumber \\
& \leq M_1(1+|z|)h,
\end{align}
where $M_1>0$ is a constant. Since (\ref{stima1}) holds for each $u \in \aaa$, we get
\begin{equation}
\label{stima1bis}
|\widetilde V(t,p+h,z)-\widetilde V(t,p,z)| \leq M_1 (1+|z|) h.
\end{equation}
The function $\widetilde V(t,\cdot, z)$ is therefore Lipschitz continuous, uniformly in $t$, and then a.e.~differentiable by the Rademacher theorem. By standard arguments it follows that $\widetilde V_p(t,p,z)$ exists for a.e.~$(t,p,z) \in [0,T] \times {\rr^2}$. The estimate on the derivative immediately follows by (\ref{stima1bis}).
\end{proof}

In the following proposition we collect some results about smoothness and monotonicity of the function $\widetilde V$ with respect to $z$. 

\begin{prop}
\label{PrPen3}
For each $(t,p) \in [0,T]\times \rr$ the function $\widetilde V(t,p,\cdot)$ is
\begin{itemize}
\item[-] Lipschitz continuous, uniformly in $t$. Moreover, the derivative $\widetilde V_z(t,p,z)$ exists for a.e. $(t,p,z) \in [0,T] \times {\rr^2}$ and we have $|\widetilde V_z(t,p,z)|\leq M_2(1+|p|)$, for some constant $M_2>0$ depending only on $\bar u$, $T$, $C$ and on the constants in (\ref{fs}) and (\ref{cpolfs}).
\item[-] concave and a.e.~twice differentiable;
\item[-] weakly increasing in $]-\infty, M-(T-t)\bar u]$ and weakly decreasing in $[m,+\infty[$. In particular, if $M-(T-t)\bar u \geq m$ then the function $ \widetilde V(t,p,\cdot)$ is constant in $[m, M-(T-t)\bar u]$ (they all are maximum points).
\end{itemize}
\end{prop}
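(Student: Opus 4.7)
The plan is to handle the three items separately; all of them exploit the same structural observation, namely that the running cost in $\widetilde J(t,p,z;u)$ does not depend on $z$ and is linear in $u$, while $Z^{t,z;u}$ is jointly affine in $(z,u)$.

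For the Lipschitz continuity in $z$, I would mimic the proof of Proposition \ref{PrPen2}. Since $Z^{t,z+h;u}(s)-Z^{t,z;u}(s)=h$ and the running integral is $z$-independent, only the terminal penalty contributes to $\widetilde J(t,p,z+h;u)-\widetilde J(t,p,z;u)$, and the Lipschitz hypothesis on $\widetilde\Phi$ in the second argument gives a bound of the form $C h\,\eee[1+|P^{t,p}(T)|]$ that is uniform in $u$. Coupling this with the standard linear-growth moment estimate for the SDE produces a bound $M_2(1+|p|)h$. Taking the supremum over $u$ and exchanging the roles of $z$ and $z+h$ yields the Lipschitz bound for $\widetilde V(t,p,\cdot)$; Rademacher and the usual argument give a.e.\ differentiability and the pointwise bound on $\widetilde V_z$.

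For the concavity, given $z_\lambda=\lambda z_1+(1-\lambda)z_2$ and $\eps$-optimal controls $u_1,u_2\in \aaa$ for $z_1,z_2$, I would use the convex combination $u_\lambda:=\lambda u_1+(1-\lambda)u_2$, which still lies in $\aaa$ because $[0,\bar u]$ is convex. Then $Z^{t,z_\lambda;u_\lambda}(T)=\lambda Z^{t,z_1;u_1}(T)+(1-\lambda)Z^{t,z_2;u_2}(T)$, so the concavity of $\widetilde\Phi(p,\cdot)$ together with the linearity of the running cost in $u$ yields $\widetilde J(t,p,z_\lambda;u_\lambda)\geq \lambda\widetilde J(t,p,z_1;u_1)+(1-\lambda)\widetilde J(t,p,z_2;u_2)$. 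Passing to the supremum on the right and sending $\eps\to 0$ gives the concavity of $\widetilde V(t,p,\cdot)$, and a classical Alexandrov-type argument yields a.e.\ twice differentiability.

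For the monotonicity, I would first establish the elementary preliminary observation that a globally concave function vanishing on $[m,M]$ is necessarily non-decreasing on $(-\infty,M]$ and non-increasing on $[m,+\infty)$; this follows from the monotonicity of the superdifferential of a concave function combined with the fact that the derivative must be $\geq 0$ at $m^-$ and $\leq 0$ at $M^+$ in order for $\widetilde \Phi$ to reach the value $0$ on $[m,M]$. With this in hand, if $z_1<z_2\leq M-(T-t)\bar u$ then for every $u\in \aaa$ both values $Z^{t,z_i;u}(T)$ lie in $(-\infty,M]$, so the non-decreasing branch of $\widetilde\Phi$ gives $\widetilde J(t,p,z_1;u)\leq \widetilde J(t,p,z_2;u)$, which passes to the supremum; symmetrically, if $m\leq z_1<z_2$ then $Z^{t,z_i;u}(T)\geq z_1\geq m$ because $u\geq 0$, and the non-increasing branch gives $\widetilde V(t,p,z_1)\geq \widetilde V(t,p,z_2)$. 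The constancy on the overlap $[m,M-(T-t)\bar u]$ (when nonempty) is then automatic. The only mildly delicate step is the preliminary monotonicity lemma for $\widetilde \Phi$; everything else is bookkeeping that propagates the linear structure of $Z^{t,z;u}$ and of the running cost to $\widetilde J$.
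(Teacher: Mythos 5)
Your proof is correct and follows essentially the same route as the paper: the Lipschitz estimate via moment bounds on $P^{t,p}$, concavity via convex combinations of controls together with the affinity of $Z^{t,z;u}$ in $(z,u)$, and monotonicity by feeding the monotone branches of the concave penalty into $\widetilde J$ and passing to the supremum. The only notable difference is that you explicitly derive from concavity and the vanishing of $\widetilde\Phi(p,\cdot)$ on $[m,M]$ that it is non-decreasing on $(-\infty,M]$ and non-increasing on $[m,+\infty)$, a fact the paper simply asserts; this fills a small gap left implicit there.
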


\begin{proof}
\textit{Item 1.} Let $(t,p,z) \in [0,T]\times \rr^2$, $h>0$ and $u \in \aaa$.
Recall the following estimate from \cite[Appendix D]{FS}: for each $k \geq 0$ there exists a constant $B_k \geq0$, depending only on $\bar u$, $T$, $C$ and on the constants in (\ref{fs}) and (\ref{cpolfs}), such that
\begin{equation}
\label{momenti}
\eee\big[{ \|P^{t,p;u}(\cdot)\| }^k \big] \leq B_k (1+|p|^k).
\end{equation}
By this and the Lipschitzianity of $\widetilde\Phi(P^{t,p}(T),\cdot)$ 
we have
\begin{multline*}
|\widetilde J(t,p,z+h;u)- \widetilde J(t,p,z;u)| \leq C \eee[(1+|P^{t,p}(T)|) \, |Z^{t,z+h;u}(T)-Z^{t,z;u}(T)|] \\
\leq C h (1+\eee[|P^{t,p}(T)|]) \leq M_2 (1+|p|) h,
\end{multline*}
where $M_2>0$ is a constant. Then argue as in Proposition \ref{PrPen2}.

\textit{Item 2.} Let $(t,p) \in [0,T]\times \rr$, $z_1,z_2 \in \rr$ and $u_1,u_2 \in \aaa$. Notice that $(u_1+u_2)/2 \in \aaa$ and that
\begin{equation}
\label{stima2}
Z^{t,\frac{z_1+z_2}{2};\frac{u_1+u_2}{2}}(T)= \frac{Z^{t,z_1;u_1}(T)+Z^{t,z_2;u_2}(T)}{2}.
\end{equation}
By the concavity of the function $\widetilde \Phi (P^{t,p}(T), \cdot)$ and by (\ref{stima2}) we have
\begin{equation}
\label{stima3}
\frac{\widetilde J(t,p,z_1;u_1)+ \widetilde J(t,p,z_2;u_2)}{2} \leq  \widetilde J \left(t,p,\frac{z_1+z_2}{2}; \frac{u_1+u_2}{2} \right) \leq V\left(t,p,\frac{z_1+z_2}{2}\right).
\end{equation}
Since (\ref{stima3}) holds for each $u_1, u_2 \in \aaa$, we get
\begin{equation*}
\frac{\widetilde V(t,p,z_1)+ \widetilde V(t,p,z_2)}{2} \leq \widetilde V \left(t,p,\frac{z_1+z_2}{2}\right),
\end{equation*}
and then the concavity of the function $\widetilde V(t,p,\cdot)$. The a.e.~existence of the second derivative follows from the Alexandrov theorem.

\textit{Item 3.} Let $(t,p) \in [0,T] \times \rr$, $z_1 \leq z_2 \leq M - (T-t)\bar u$ (the case $m \leq z_1 \leq z_2$ is similar) and $u \in \aaa$. Since
\begin{equation*}
Z^{t,z_1;u}(T) \leq Z^{t,z_2;u}(T) = z_2 + \int_t^T u(s) ds \leq z_2 + (T-t)\bar u \leq M
\end{equation*}
and since the function $\widetilde \Phi (P^{t,p}(T), \cdot)$ is weakly increasing in $]-\infty,M]$, we have that
\begin{equation}
\label{stima4}
\widetilde J (t,p,z_1;u) \leq \widetilde J (t,p,z_2;u).
\end{equation}
As inequality (\ref{stima4}) holds for each $u \in \aaa$, we get
\begin{equation*}
\widetilde V(t,p,z_1) \leq \widetilde V(t,p,z_2).
\end{equation*}
The second part immediately follows, since $]-\infty, M-(T-t)\bar u] \cap [m, +\infty[=[m, M-(T-t)\bar u]$.
\end{proof}

The monotonicity result in Proposition \ref{PrPen3} is described in Figure \ref{PICcrescpenalty}.
\begin{figure}[htb]
\centering
\includegraphics[width=0.25\columnwidth]{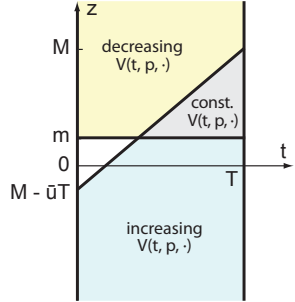}
\caption{monotonicity of $\widetilde V(t,p,\cdot)$}
\label{PICcrescpenalty}
\end{figure}

The third part of Proposition \ref{PrPen3} provides an apparently unexpected result: for suitable $t$ and for all $p$, the function $V(t,p,\cdot)$ is constant in an interval. As a matter of fact, this was foreseeable: it is easy to check that if $M-(T-t)\bar u \geq m$ and $z \in [m, M-(T-t)\bar u]$ then $Z^{t,z;u}(T) \in [m,M]$ for each $u \in \aaa$, so that the penalization term in the objective functional vanishes and the initial value $z$ does not influence the value function.

\begin{remark}
\label{remark}
As observed in \cite[Equation (3.9)]{BLN}, by (\ref{HJBpenalt}) a candidate optimal control policy is
\begin{equation}
\label{candidate}
\underline{u}(t,p,z)=
\begin{cases}
\bar u \qquad  \text{if $\widetilde V_z(t,p,z)\geq p-K$},\\
0 \qquad  \text{if $\widetilde V_z(t,p,z) < p-K$}.
\end{cases}
\end{equation}
Notice that by Proposition \ref{PrPen3} the candidate in (\ref{candidate}) is a.e.~well-defined. Moreover, since $\widetilde V$ is concave in $z$, for each fixed $(t,p)$ there exists $\bar z(t,p) \in [-\infty,+\infty]$ such that $\widetilde V_z(t,p,z) < p - K$ if and only if $z > \bar z(t,p)$: for $t$ fixed, the function $\bar z(t,\cdot)$ (which in \cite{BLN} is called exercise curve) can be used to write $\underline u$ as
\begin{equation}
\label{candidate2}
\underline{u}(t,p,z) =
\begin{cases}
\bar u \qquad  \text{if } z \leq \bar z(t,p),\\
0 \qquad  \text{if } z > \bar z(t,p).
\end{cases}
\end{equation}
\end{remark}

\section{Integral constrained stochastic optimal control}
\label{CAPcontrottvinc}

Let us now consider the problem, outlined in the Introduction, of optimally exercising swing contracts with strict constraints. Due to the presence of the constraint, in this case it is not possible to argue as in Section \ref{CAPopzioniswingpenal} and use classical results in control theory. This motivates us to study a more general class of stochastic optimal control problems with integral constraints, of which swing contracts with strict constraints will be a particular case.

\subsection{Formulation of the problem}
\label{COV}

Let $d,l,n \in \mathbb{N}$, $r \geq 0$, $T>0$ and $m,M\in\rr$ with $m<M$. Let $U \subseteq \rr^l$ be nonempty and $f, \sigma, g, L, \Phi$ be functions satisfying the following assumptions:

\begin{ipo}
\begin{itemize}
\item[i)] $U$ is a compact subset of $\rr^l$;
\item[ii)]$f \in C([0,T]\times \rr^n \times U ; \rr^n)$, $\sigma \in C([0,T]\times \rr^n \times U ; M_{nd}(\mathbb{R}))$ and there exists a constant $C>0$ such that
\begin{equation}
\label{fs}
\begin{split}
& |f(t,p,v)-f(t,q,v)| \leq C|p-q|, \quad \forall p,q \in \rr^n, \,\,\, \forall (t,v)\in[0,T]\times U, \\
& |\sigma(t,p,v)-\sigma(t,q,v)| \leq C|p-q|, \quad \forall p,q \in \rr^n, \,\,\, \forall (t,v)\in[0,T]\times U;
\end{split}
\end{equation}
\item[iii)] $g \in C([0,T]\times U ; \rr)$;
\item[iv)] $L \in C([0,T] \times \rr^{n} \times \rr \times U ; \rr)$, $\Phi \in C(\rr^{n} \times \rr ; \rr)$ and there exist constants $\tilde C,k>1$ such that
\begin{equation}
\label{lphiV}
\begin{split}
& |L(t,p,z,v)| \leq \tilde C(1 + |p|^k + |z|^k), \,\,\quad \forall (t,p,z,v)\in [0,T] \times \rr^{n} \times \rr \times U,\\
& |\Phi(p,z)| \leq \tilde C(1 + |p|^k + |z|^k), \,\,\quad \forall (p,z)\in \rr^{n} \times \rr.
\end{split}
\end{equation}
    Moreover, for each compact subset $A \subseteq \rr^{n+1}$ there exists a modulus of continuity $\omega_A$ such that
\begin{equation}
|L(t,p,z,v)-L(t,q,y,v)|\leq \omega_R(|p-q|+|z-y|),
\end{equation}
    for all $(t,v)\in[0,T]\times U$ and for all $(p,z),(q,y)\in A$.
\end{itemize}
\end{ipo}
\noindent Notice that conditions (\ref{fs}) implies that
\begin{equation}
\label{cpolfs}
\begin{split}
& |f(t,p,v)| \leq \hat C(1 + |p|), \qquad \forall (t,p,v)\in [0,T] \times \rr^n \times U,\\
& |\sigma(t,p,v)| \leq \hat C(1 + |p|), \qquad \forall (t,p,v)\in [0,T] \times \rr^n \times U,\\
\end{split}
\end{equation}
where $\hat C >0$ is a constant.

Let $(\Omega, \mathcal{F}_T, \{\mathcal{F}_s\}_{s\in[0,T]}, \mathbb{P}, W)$ be a fixed filtered probability space where a $d$-dimensional $\{\mathcal{F}_s\}_s$-adapted Brownian motion $W=\{W(s)\}_{s \in [0,T]}$ is defined. If $t \in [0,T]$, let $\aaa$ denote the set of all $U$-valued progressively measurable processes $u=\{u(s)\}_{s\in[t,T]}$ (\emph{controls}) such that for each $p \in \rr^n$ the $n$-dimensional stochastic differential equation.
\begin{equation}
\label{SDEV}
dP^{t,p;u}(s)=f(s,P^{t,p;u}(s),u(s))ds + \sigma(s,P^{t,p;u}(s),u(s))dW(s),    \qquad s\in[t,T],
\end{equation}
with initial condition
\begin{equation}
\label{SDE2V}
P^{t,p;u}(t)=p,
\end{equation}
has a pathwise unique strong solution.

Let $t \in [0,T]$, $z \in \rr$, $u \in \aaa$ and let
\begin{equation}
\label{Z}
Z^{t,z;u}(s)= z+ \int_t^s g(\tau,u(\tau))d\tau,  \qquad s\in[t,T].
\end{equation}
A control $u \in \aaa$ is called \emph{admissible} if the process $Z^{t,z;u}$ a.s. reaches the interval $[m,M]$ at the final time $T$:
\begin{equation*}
\aav= \left \{ \text{$u \in \aaa$ : $Z^{t,z;u}(T) \in [m,M]$ \,\, $\ppp$-a.s.} \right \}.
\end{equation*}
We will often write $P^u$ and $Z^u$, in order to shorten the notations.

Given $(t,z) \in [0,T] \times \rr$ and $A \subseteq \rr$, we say that \emph{$A$ is reachable from $(t,z)$} if there exists a Borel measurable function $u$ from $[t,T]$ to $U$ (notice that then $u \in \aaa$) such that $Z^{t,z;u}(T)\in A$. Let $\dd, \widetilde \dd, \dd^\rho$ (for $0<\rho<(M-m)/2$) denote the subsets of $[0,T] \times \rr^{n} \times \rr$ defined by
\begin{gather*}
\dd= \left \{ \text{$(t,p,z) \in [0,T] \times \rr^{n} \times \rr$ : $[m,M]$ is reachable from $(t,z)$} \right \},\\
\widetilde \dd= \left \{ \text{$(t,p,z) \in [0,T] \times \rr^{n} \times \rr$ : $]m,M[$ is reachable from $(t,z)$} \right \},\\
\dd^ \rho= \left \{ \text{$(t,p,z) \in [0,T] \times \rr^{n} \times \rr$ : $[m+\rho,M-\rho]$ is reachable from $(t,z)$} \right \}.
\end{gather*}
Notice that $\bigcup_\rho \dd^\rho = \widetilde \dd \subseteq \dd$. It is easy to prove that these sets are nonempty.

\begin{lemma}
The sets $\dd, \widetilde \dd, \dd^\rho$ are nonempty.
\end{lemma}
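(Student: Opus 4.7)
The plan is to exploit the inclusions $\dd^\rho\subseteq\widetilde\dd\subseteq\dd$ (which follow from $[m+\rho,M-\rho]\subseteq\,]m,M[\,\subseteq[m,M]$) to reduce everything to showing $\dd^\rho\neq\emptyset$ for every $\rho\in(0,(M-m)/2)$. Once a single point is exhibited in $\dd^\rho$, the other two sets are automatically nonempty.

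The cleanest construction is to pick the terminal time. I would take $t=T$, an arbitrary $p\in\rr^n$, and any $z_0\in[m+\rho,M-\rho]$. Any Borel measurable $u\colon\{T\}\to U$ (which exists since $U$ is nonempty) is progressively measurable, and by definition
\begin{equation*}
Z^{T,z_0;u}(T)=z_0+\int_T^T g(\tau,u(\tau))\,d\tau=z_0\in[m+\rho,M-\rho],
\end{equation*}
so $(T,p,z_0)\in\dd^\rho$. This proves the claim for every $\rho$, hence the nonemptyness of all three sets.

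If one prefers an interior point with $t<T$, a similarly direct argument works: fix any $v_0\in U$ (nonempty by Assumption i)), define the constant control $u(\tau)\equiv v_0$ on $[t,T]$, which is trivially Borel measurable, and set
\begin{equation*}
z:=z_0-\int_t^T g(\tau,v_0)\,d\tau,
\end{equation*}
where $z_0\in[m+\rho,M-\rho]$; the integral is finite by the continuity of $g$ on the compact $[0,T]\times U$. Then $Z^{t,z;u}(T)=z_0\in[m+\rho,M-\rho]$, so $(t,p,z)\in\dd^\rho$ for any $p\in\rr^n$.

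I do not expect any genuine obstacle here: both constructions bypass the SDE entirely, since reachability is a property only of the deterministic state $Z$ and a Borel measurable control suffices in the definition. The only tiny point to verify is that the constant (or trivial) control indeed belongs to the admissible class in the sense required by the definition of reachability, which is immediate.
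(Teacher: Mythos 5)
Your proof is correct and follows essentially the same path as the paper's: reduce via the inclusions $\dd^\rho\subseteq\widetilde\dd\subseteq\dd$ to exhibiting one point of $\dd^\rho$, then construct it directly. The paper exhibits $(\tilde t,\tilde p,\tilde z)$ with $T-\tilde t$ small enough (using $g([0,T]\times U)=[\xi_1,\xi_2]$) that \emph{every} Borel control lands $Z(T)$ in $[m+\rho,M-\rho]$, whereas you correctly observe that the definition of reachability only requires a single such control, so your $t=T$ and constant-control constructions are marginally more economical variants of the same idea.
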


\begin{proof}
Let $0 < \rho < (M-m)/2$. As $\dd^\rho \subseteq \widetilde \dd \subseteq \dd$, it suffices to show that $\dd^\rho \neq \varnothing$. Since $g([0,T]\times U)=[\xi_1,\xi_2]$, for suitable $(\tilde t,\tilde z) \in [0,T]\times\rr$ we have that $Z^{\tilde t,\tilde z,u}(T)\in[\tilde z+\xi_1(T-\tilde t),\tilde z+\xi_2(T-\tilde t)]\subseteq[m+\rho,M-\rho]$ for each Borel measurable function $u$ from $[\tilde t,T]$ to $U$, and thus $(\tilde t, \tilde p, \tilde z) \in \dd^\rho$ (arbitrary $\tilde p \in \rr$).
\end{proof}

If $(t,p,z)\in\dd$, by $\eee$ we denote the mean value with respect to the probability $\ppp=\mathbb{P}$ (subscripts recall initial data). We can now define the \emph{value function}.

\begin{defn}
We set
\begin{equation}
\label{VV}
V(t,p,z)=\sup_{u \in \aav} J(t,p,z;u),
\end{equation}
for each $(t,p,z) \in \dd$, where
\begin{multline*}
J(t,p,z;u) = \eee \biggl[ \int_t^T e^{-r(s-t)}L(s,P^{t,p;u}(s),Z^{t,z;u}(s),u(s))ds \\
+ e^{-r(T-t)}\Phi(P^{t,p;u}(T),Z^{t,z;u}(T))\biggr].
\end{multline*}
\end{defn}

Let us prove that the value function (\ref{VV}) is well defined.

\begin{lemma}
The expectations in (\ref{VV}) are well posed, $V(t,p,z)<\infty$ and
\begin{equation}
\label{VVpol}
|V(t,p,z)| \leq \Gamma (1+|p|^k+|z|^k),
\end{equation}
for each $(t,p,z)\in \dd$, where $k$ is as in (\ref{lphiV}) and $\Gamma \geq 0$ is a constant depending only on $U$, $T$, $C$, $\tilde C$, $\hat C$ and $\max g$. Moreover, $\dd$ is the maximal set in which expression (\ref{VV}) makes sense.
\end{lemma}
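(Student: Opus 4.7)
The plan splits into two parts: a polynomial upper bound on $|J(t,p,z;u)|$ uniform in $u\in\aav$ (which gives the first three claims at once), and a reachability analysis for $Z^{t,z;u}(T)$ (which gives the maximality of $\dd$).

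First I would fix $(t,p,z)\in\dd$ and any $u\in\aaa$, and collect moment estimates. Since $g$ is continuous on the compact set $[0,T]\times U$, it is bounded, so $\|g\|_\infty:=\max_{[0,T]\times U}|g|<\infty$ and
\[
|Z^{t,z;u}(s)|\;\leq\;|z|+(s-t)\|g\|_\infty\;\leq\;|z|+T\|g\|_\infty\qquad\text{pathwise, }\ppp\text{-a.s.}
\]
For $P^{t,p;u}$, the linear growth estimate (\ref{cpolfs}) on $f,\sigma$ together with the standard Gronwall/BDG argument (the same one quoted in (\ref{momenti}) from \cite[Appendix~D]{FS}) yields the existence, for every $k\geq 1$, of a constant $B_k>0$ depending only on $T$, $C$ and $\hat C$ with
\[
\eee\bigl[\|P^{t,p;u}(\cdot)\|^k\bigr]\;\leq\;B_k(1+|p|^k).
\]

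Next I would feed these into the polynomial growth of $L$ and $\Phi$ in (\ref{lphiV}). Since $e^{-r(s-t)}\leq 1$,
\[
|J(t,p,z;u)|\;\leq\;\int_t^T\eee\bigl[\tilde C(1+|P^u(s)|^k+|Z^u(s)|^k)\bigr]\,ds+\eee\bigl[\tilde C(1+|P^u(T)|^k+|Z^u(T)|^k)\bigr],
\]
and each term on the right can be bounded via the two estimates above by an expression of the form $\Gamma(1+|p|^k+|z|^k)$ for a constant $\Gamma$ depending only on $U$, $T$, $C$, $\tilde C$, $\hat C$ and $\|g\|_\infty$. In particular the expectations in (\ref{VV}) are well posed and taking the supremum over $u\in\aav\subseteq\aaa$ preserves the bound, giving (\ref{VVpol}).

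For the maximality of $\dd$, I would observe that the range of $Z^{t,z;u}(T)$ over \emph{any} progressively measurable $u$ is pathwise trapped in the deterministic interval
\[
I_{t,z}\;:=\;\Bigl[z+\int_t^T g_{\min}(\tau)\,d\tau,\;z+\int_t^T g_{\max}(\tau)\,d\tau\Bigr],
\]
where $g_{\min}(\tau):=\min_{v\in U}g(\tau,v)$ and $g_{\max}(\tau):=\max_{v\in U}g(\tau,v)$ are continuous since $U$ is compact and $g$ continuous. Conversely, by a measurable selection theorem I can pick Borel selectors $v_{\min},v_{\max}:[0,T]\to U$ attaining $g_{\min},g_{\max}$ respectively; switching between them across a threshold time $\theta\in[t,T]$ produces a one-parameter family of Borel controls whose corresponding $Z^{t,z;u}(T)$ sweeps continuously over the whole of $I_{t,z}$ by an intermediate-value argument. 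Therefore $I_{t,z}\cap[m,M]\neq\varnothing$ is equivalent to the reachability condition defining $\dd$. If $(t,p,z)\notin\dd$, then $I_{t,z}\cap[m,M]=\varnothing$, so $Z^{t,z;u}(T)\notin[m,M]$ $\ppp$-a.s.\ for every $u\in\aaa$, i.e.\ $\aav=\varnothing$ and the right-hand side of (\ref{VV}) is undefined; this proves the maximality.

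The only slightly delicate step is the reachability converse in the last paragraph (existence of Borel selectors and continuous sweep), but these are standard consequences of the continuity of $g$ and compactness of $U$; the moment estimate on $P^{t,p;u}$ is the usual SDE bound. All other steps are routine.
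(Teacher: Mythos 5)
Your proof is correct and follows essentially the same approach as the paper's: the polynomial bound on $J(t,p,z;u)$ comes from the moment estimate (\ref{momenti}) on $P^{t,p;u}$, the pathwise bound $|Z^{t,z;u}(s)|\le|z|+T\max|g|$, and the growth condition (\ref{lphiV}), uniformly in $u$. The paper merely asserts that $\aav\neq\varnothing$ if and only if $(t,p,z)\in\dd$; your selector plus intermediate-value sweep over the deterministic interval $I_{t,z}$ is a correct and more explicit justification of the ``only if'' half of that equivalence (and sidesteps the measurability subtlety of extracting a single Borel path from a progressively measurable admissible control).
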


\begin{proof}
First of all, notice that $\aav \neq \varnothing$ if and only if $(t,p,z) \in \dd$ for each $p \in \rr^n$.
Recall estimate (\ref{momenti}): it can be shown that $B_k$ depends only on the set $U$ and on constants $T$, $C$, $\hat C$ (see \cite[Appendix D]{FS}. By (\ref{lphiV}) and (\ref{momenti}) we have
\begin{align}
\label{Vpol}
& \eee \bigg[ \, \bigg|\int_t^T e^{-r(s-t)}L(s,P^{t,p;u}(s),Z^{t,z;u}(s),u(s))ds + e^{-r(T-t)}\Phi(P^{t,p;u}(T),Z^{t,z;u}(T))\bigg| \, \bigg] \nonumber \\
& \leq \tilde C \eee \bigg[ \int_t^T (1+|P^{t,p;u}(s)|^k+|Z^{t,z;u}(s)|^k)ds + (1+|P^{t,p;u}(T)|^k+|Z^{t,z;u}(T)|^k)\bigg] \nonumber \\
& \leq C_1 \eee [ 1+\|P^{t,p;u}(\cdot)\|^k+\|Z^{t,z;u}(\cdot)\|^k] \nonumber \\
& \leq C_2 (1+|p|^k+|z|^k),
\end{align}
for suitable constants $C_1,C_2>0$.
\end{proof}

We also require the following assumption to hold.

\begin{ipo}
\label{assrho}
Given $0<\rho<(M-m)/2$ and a compact subset $A \subseteq \dd^\rho$, there exist $\bar \eps>0$ and a function $0< \eta \leq (M-m)/2$, both depending only on $\rho$, $A$, $T$ and $U$, with the following property: for each $0<\eps<\bar\eps$, $(t,p,z)\in A$, $u \in\aav$ there exists $\tilde u \in \aaa$ such that $|J(t,p,z;u)-J(t,p,z;\tilde u)|\leq \eps$ and that a.s.~$Z^{t,z;\tilde u}(T)\in [m+\eta(\eps),M-\eta(\eps)]$.
\end{ipo}

In Section \ref{COVesempi} we will give two examples of wide classes of problems satisfying Assumption \ref{assrho}.

\subsection{Approximating problems}

We would like to obtain for the problems of Section \ref{COV} the standard results in unconstrained control theory: continuity of the value function and characterization of the value function by the Hamilton-Jacobi-Bellman (HJB) equation. A straightforward approach is not possible, since condition $u \in \aav$ prevents from simply adapting classical proofs.

The idea is then the following: to define suitable unconstrained problems which approximate our constrained problem and then to obtain the properties of the value function (\ref{VV}) through a limiting procedure. The construction of the approximating problems is based on the idea of penalizing the case $Z^{t,z;u}(T) \notin [m,M]$ by adding a suitable term in the objective functional.

Given $c>0$, let $\Phi^c$ be the function from $\rr$ to $\rr$ defined by
\begin{equation}
\label{phic}
\Phi^c(z) = - c \left[\left(z-\left(M-\frac{1}{\sqrt{c}}\right)\right)^+ + \left(\left(m+\frac{1}{\sqrt{c}}\right)-z\right)^+ \right],
\end{equation}
for each $z \in \rr$. Let the assumptions of Section \ref{COV} hold and consider the following unconstrained problem:
\begin{equation}
\label{Vc}
V^c(t,p,z)= \sup_{u \in \aaa}  J^c(t,p,z;u),
\end{equation}
where $(t,p,z) \in [0,T]\times \rr^{n} \times \rr$,
\begin{multline*}
J^c(t,p,z;u) = \eee \biggl[ \int_t^T e^{-r(s-t)}L(s,P^{t,p;u}(s),Z^{t,z;u}(s),u(s)) ds \\
+ e^{-r(T-t)}\Phi(P^{t,p;u}(T),Z^{t,z;u}(T)) + e^{-r(T-t)}\Phi^c(Z^{t,z;u}(T))\biggr]
\end{multline*}
and this time the maximization is performed over the set $\aaa$ of all controls.

Problem (\ref{Vc}) is a classical unconstrained stochastic control problem; therefore, by classical results, the function $V^c$ is characterized by the HJB equation. Here is the precise statement.

\begin{thm}
\label{propVc}
Let the assumptions of Section \ref{COV} hold and let $c>0$ and $k$ as in (\ref{lphiV}). Then $V^c$ is the unique viscosity solution of
\begin{multline}
\label{HJBc}
-V^c_t(t,p,z) + r V^c(t,p,z) + \min_{v \in U}\biggl[- f(t,p,v) \cdot D_pV^c(t,p,z) - g(t,v) V^c_z(t,p,z) \\
-\frac{1}{2}\textup{tr} \big(\sigma(t,p,v) \sigma^t(t,p,v) D^2_p V^c(t,p,z)\big) - L(t,p,z,v) \biggr]=0, \,\,\, \forall (t,p,z) \in [0,T[ \times \rr^{n} \times \rr,
\end{multline}
with final condition
\begin{equation}
\label{HJB2c}
V^c(T,p,z)=\Phi(p,z)+\Phi^c(z), \qquad \forall (p,z) \in \rr^{n+1},
\end{equation}
and such that
\begin{equation*}
|V^c(t,p,z)| \leq \check C (1+|p|^{k}+|z|^{k}), \qquad \forall (t,p,z) \in [0,T] \times \rr^n \times \rr,
\end{equation*}
for some constant $\check C >0$.
\end{thm}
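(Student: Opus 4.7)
The plan is to verify the three standard ingredients that characterize $V^c$ as the unique viscosity solution of (\ref{HJBc})-(\ref{HJB2c}): the polynomial growth bound, the viscosity solution property via the dynamic programming principle, and uniqueness via a comparison principle.

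\textbf{Polynomial growth.} First I would observe that the penalization $\Phi^c$ defined by (\ref{phic}) has at most linear growth in $|z|$, so its contribution is absorbed into the $|z|^k$ term (recall $k>1$). Combining the bounds on $L$ and $\Phi$ from (\ref{lphiV}) with the moment estimate (\ref{momenti}) applied to $P^{t,p;u}$ (whose coefficients satisfy (\ref{fs})-(\ref{cpolfs})) and with the elementary bound $|Z^{t,z;u}(s)| \leq |z| + T \max_{[0,T]\times U}|g|$, I would repeat essentially the computation leading to (\ref{Vpol}) and obtain $|V^c(t,p,z)| \leq \check C(1+|p|^k+|z|^k)$ for a suitable constant $\check C$.

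\textbf{Dynamic programming and viscosity property.} Next I would establish the dynamic programming principle for $V^c$: for every stopping time $\theta \in [t,T]$,
\begin{equation*}
V^c(t,p,z) = \sup_{u \in \aaa} \eee\biggl[ \int_t^\theta e^{-r(s-t)} L(s,P^u(s),Z^u(s),u(s))\, ds + e^{-r(\theta-t)} V^c(\theta, P^u(\theta), Z^u(\theta)) \biggr].
\end{equation*}
This is classical (see e.g.\ \cite{FS}): it is proved via measurable selection combined with the Markov property of the state process $(P^u,Z^u)$ (note that augmenting the state with the $z$-component restores markovianity, since $g$ does not depend on $P$). From the DPP, together with It\^o's formula applied to a smooth test function $\varphi$ touching $V^c$ at a point $(t,p,z) \in [0,T[\,\times\rr^n\times\rr$, the viscosity sub- and supersolution inequalities for (\ref{HJBc}) follow along the usual lines (the Hamiltonian is continuous since $U$ is compact and all coefficients are continuous in $v$). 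The terminal condition (\ref{HJB2c}) is immediate from the definition of $V^c$, noting that the final-time payoff $\Phi(p,z)+\Phi^c(z)$ is continuous.

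\textbf{Uniqueness.} Finally, uniqueness among viscosity solutions with polynomial growth of order $k$ follows from a comparison principle. The typical obstacle is that textbook comparison theorems are formulated for bounded (or bounded uniformly continuous) sub/supersolutions, whereas here the growth is only polynomial. The standard remedy is a doubling-of-variables argument in which the penalty $\alpha |p-q|^2+\alpha|z-y|^2$ is supplemented by an auxiliary term of the form $\lambda e^{\mu t}(1+|p|^2+|z|^2)^{k/2}$ designed to kill the growth at infinity; the Lipschitz bounds (\ref{fs}) together with the polynomial growth (\ref{cpolfs}) and the modulus hypothesis on $L$ ensure that the extra terms arising from the Hamiltonian can be absorbed for $\lambda>0$ small and $\mu$ large. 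I would either invoke a ready-made statement from \cite{DaLioLey2} or \cite{EV}, whose hypotheses are met under the assumptions of Section \ref{COV}, or carry out this argument explicitly. This comparison step is the main technical point of the proof; the continuity of $V^c$ is then a byproduct of uniqueness (equality of upper and lower semicontinuous envelopes), and everything else is routine.
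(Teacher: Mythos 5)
Your sketch is correct and follows essentially the same route as the paper, which simply cites the classical argument in \cite[Chapter IV]{FS} for the viscosity solution property and \cite[Thm.~3.1]{DaLioLey2} for uniqueness under polynomial growth. You have just filled in the standard intermediate steps (growth estimate via the moment bound, DPP plus It\^o's formula, comparison principle) that those references supply.
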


\begin{proof}
The value function is a viscosity solution of (\ref{HJBc}) by a standard result in unconstrained control theory (the proof, for instance, can be achieved by slightly modifying the arguments in \cite[Chapter IV]{FS}). As for uniqueness, see \cite[Thm.~3.1]{DaLioLey2}.
\end{proof}

\subsection{Properties of the value function}
\label{COVpropfv}

We now prove the central result of this paper: the value functions $V^c$ in (\ref{Vc}) converge, uniformly on the compact subsets of each $\dd^\rho$, to the value function $V$ in (\ref{VV}). On one hand, this results provides an approximation of $V$ (recall the characterization of the functions $V^c$ in Theorem \ref{propVc}); on the other hand, in such a way $V$ inherits continuity from the functions $V^c$.

\begin{thm}
\label{thconv1}
Let the assumptions of Section \ref{COV} hold. Then, as $c \to +\infty$, the functions $V^c$ converge to $V$ uniformly on compact subsets of $\dd^\rho$, for each $0<\rho<(M-m)/2$.
\end{thm}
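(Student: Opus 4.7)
\emph{Proof plan.} The plan is to prove the two one-sided inequalities $\liminf_{c\to\infty} V^c(t,p,z) \geq V(t,p,z)$ and $\limsup_{c\to\infty} V^c(t,p,z) \leq V(t,p,z)$, each uniformly on a given compact $A \subseteq \dd^\rho$; fix such $A$ and $0 < \rho < (M-m)/2$ throughout.

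\emph{Lower bound.} This direction is a direct application of Assumption \ref{assrho}. Fix $\eps \in (0, \bar\eps)$ and let $\eta = \eta(\eps)$ be as in the assumption. For $(t,p,z) \in A$, choose $u \in \aav$ with $J(t,p,z;u) \geq V(t,p,z) - \eps$, and apply Assumption \ref{assrho} to obtain $\tilde u \in \aaa$ with $|J(t,p,z;u) - J(t,p,z;\tilde u)| \leq \eps$ and $Z^{t,z;\tilde u}(T) \in [m+\eta, M-\eta]$ a.s. For every $c \geq 1/\eta^2$ we have $[m+\eta, M-\eta] \subseteq [m+1/\sqrt{c}, M-1/\sqrt{c}]$, so $\Phi^c(Z^{t,z;\tilde u}(T)) = 0$ a.s., hence
\[V^c(t,p,z) \geq J^c(t,p,z;\tilde u) = J(t,p,z;\tilde u) \geq V(t,p,z) - 2\eps.\]
Since $\eta$ depends only on $(\rho, A, T, U)$, the bound is uniform on $A$.

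\emph{Upper bound.} For each $c > 0$ pick $u_c \in \aaa$ with $J^c(t,p,z;u_c) \geq V^c(t,p,z) - 1/c$. Since $\Phi^c \leq 0$ and $|\Phi^c(z)| \geq \sqrt{c}$ for $z \notin [m,M]$, the identity $\eee[e^{-r(T-t)}(-\Phi^c(Z^{t,z;u_c}(T)))] = J(t,p,z;u_c) - J^c(t,p,z;u_c)$, together with the polynomial bound (\ref{VVpol}) on $J(t,p,z;u_c)$ and the already-proved lower bound on $V^c$, gives uniformly on $A$
\[\ppp\bigl(Z^{t,z;u_c}(T) \notin [m,M]\bigr) \leq \frac{C_A}{\sqrt c}.\]
I would then perturb $u_c$ into $\hat u_c \in \aav$ by a hitting-time construction: letting $\tau$ be the first time at which $Z^{t,z;u_c}$ exits a shrinking interior $[m+\delta_c, M-\delta_c]$ (with $\delta_c \to 0$ compatible with the above estimate), set $\hat u_c = u_c$ on $[t,\tau)$ and, on $[\tau,T]$, take a fixed $v \in U$ that, by continuity of $g$, keeps $Z^{\hat u_c}$ inside $[m,M]$. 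The cost loss $|J(t,p,z;u_c) - J(t,p,z;\hat u_c)|$ is then controlled by combining the $O(1/\sqrt c)$ bound on the exit probability with the polynomial growth of $L$ and $\Phi$ and the moment bounds (\ref{momenti}), giving a quantity $\gamma(c) \to 0$ uniformly on $A$. Hence $V(t,p,z) \geq J(t,p,z;\hat u_c) \geq J^c(t,p,z;u_c) - \gamma(c) \geq V^c(t,p,z) - \gamma(c) - 1/c$.

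\emph{Main obstacle.} The delicate direction is the upper bound: Assumption \ref{assrho} cannot be invoked there because $u_c$ need not belong to $\aav$, so the admissible modification $\hat u_c$ must be constructed directly, preserving non-anticipativity and yielding a cost loss uniform in $(t,p,z) \in A$. The $O(1/\sqrt c)$ exit-probability bound together with the polynomial growth and moment estimates points to the correct scale, but verifying the uniform estimates for the modified trajectory (and the existence of a suitable freezing value in $U$ through the range of $g(\cdot,U)$) is the main technical step.
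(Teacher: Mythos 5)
Your lower bound step is exactly the paper's Step 4: take a near-optimal $u\in\aav$, invoke Assumption \ref{assrho} to get $\tilde u$ with $Z^{\tilde u}(T)\in[m+\eta,M-\eta]$ a.s., and observe $\Phi^c\equiv 0$ there once $c\geq\eta^{-2}$, so $V^c\geq V-2\eps$ uniformly on $A$. That half matches.

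The upper bound is where you diverge, and it is where the gap sits. You bound the \emph{terminal} violation probability $\ppp(Z^{u_c}(T)\notin[m,M])\leq C_A/\sqrt c$ correctly, but then try to produce an admissible $\hat u_c$ by stopping at the first exit of $Z^{u_c}$ from a shrinking band $[m+\delta_c,M-\delta_c]$ and ``freezing'' afterwards. Three things break. First, the penalty $\Phi^c$ only sees $Z(T)$, so the probability of an \emph{intermediate} exit $\{\tau<T\}$ is not controlled by $C_A/\sqrt c$ at all — $Z^{u_c}$ may leave $[m+\delta_c,M-\delta_c]$ and re-enter before $T$ with large probability. Second, nothing in Assumption 3.1 guarantees a ``freezing'' value $v\in U$: $g$ is only assumed continuous, and if e.g.\ $g>0$ everywhere there is no way to keep $Z$ from running past $M$ after $\tau$, nor to push it up to $m$ if the lower boundary is hit near $T$. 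Third, $\dd^\rho$ contains points with $z\notin[m,M]$ (e.g.\ $z<m$), so $\tau=t$ already and the construction fails at time zero. The paper's proof (Steps 2–3) avoids all three issues by a different device: it does not use a stopping time at all. It partitions $\aaa$ into layers $B_n^{tpz}$ by the terminal violation probability, discards the deeply violating layers from the supremum because they make $J^c$ smaller than the lower bound $K$ from Step 1, and for the remaining controls it replaces $u$ wholesale by the deterministic reachability control $u_{tpz}$ of $\dd^\rho$ on the terminal bad event $\Pi^u=\{Z^u(T)\notin[m,M]\}$, which exists from $(t,z)$ by definition of $\dd^\rho$ regardless of the sign of $g$ or the location of $z$, and then estimates $|J(u)-J(\tilde u)|\lesssim\ppp(\Pi^u)^{1/2}$ via Hölder. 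If you want to keep your ``near-optimizer $u_c$'' viewpoint, you should still adopt this whole-path switch on $\Pi^{u_c}$ rather than a hitting-time surgery: the latter needs structural information on $g$ and a control of the running trajectory that the penalty simply does not give you.
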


\begin{proof}
Let $0<\rho<(M-m)/2$, $A$ be a compact subset of $\dd^\rho$ and $R>0$ be such that $\overline{\text{B}(0,R)}\supseteq A$. For each $\eps >0$, we have to prove that there exists $\delta>0$ such that
\begin{equation}
\label{Tconverg1}
\bigg |\sup_{u \in \aaa} J^c(t,p,z;u)- \sup_{u \in \aav} J(t,p,z;u) \bigg| \leq \eps,
\end{equation}
for each $c\geq\delta$ and $(t,p,z)\in A$.

\textit{Step 1: lower bound for $V^c$ in $A$}. By definition of $\dd^\rho$, for each $(t,p,z)\in A$ let $u_{tpz}$ be a Borel measurable function from $[t,T]$ to $U$ such that $Z^{u_{tpz}}(T)\in[m+\rho,M-\rho]$. Since $[m+\rho,M-\rho] \subseteq [m+ c ^{-\frac{1}{2}},M- c ^{-\frac{1}{2}}]$ for $c \geq \rho^{-2}$, notice that $J^c(t,p,z;u_{tpz}) \equiv K_{tpz}$ for a suitable constant $K_{tpz}$ for all  $c \geq  \rho^{-2}$. By estimates as in (\ref{Vpol}), it is easy to show that, for a constant $C_1>0$ and for $k \geq 0$ as in (\ref{lphiV}), we have $|K_{tpz}|\leq C_1(1+|p|^k+|z|^k) \leq C_1(1+2 R^k)$ for each $(t,p,z)\in A$, so that $K:= \inf_{(t,p,z)\in A}K_{tpz}\in\rr$. Therefore,
\begin{equation}
\label{Tconverg2}
V^c(t,p,z) \geq J^c(t,p,z;u_{tpz})=K_{tpz}\geq K,
\end{equation}
for each $(t,p,z) \in A$ and $c\geq  \rho^{-2}$.

\textit{Step 2:  new formulation of (\ref{Tconverg1})}. Let $(t,p,z)\in A$. For each $n \in \nn$ we set
\begin{equation*}
B_n^{tpz}=\left \{ u \in \aaa : \frac{1}{n+1}< \ppp(Z^u(T) \notin [m,M]) \leq \frac{1}{n}  \right \}.
\end{equation*}
Let $c \geq \rho^{-2}$, $n \in \nn$ and $u \in B_n^{tpz}$. By noting that $\Phi^c\leq0$ and that $\Phi^c (x)\leq -\sqrt{c}$ for $x \notin [m,M]$ and by estimates as in (\ref{Vpol}), we have
\begin{align}
\label{Tconverg3}
J^c(t,p,z;u) & = J(t,p,z;u) + \eee\left[e^{-r(T-t)}\Phi^c(Z^u(T))\right] \nonumber \\
& \leq J(t,p,z;u) + \eee\left[e^{-r(T-t)}\Phi^c(Z^u(T)) \mathbbm{1}_{ \{Z^u(T)\notin[m,M]\} }\right]  \nonumber \\
& \leq C_2(1+|p|^k+|z|^k) - e^{-rT}\sqrt{c} \,\, \ppp( Z^u(T)\notin[m,M] )  \nonumber \\
& < C_2(1+2R^k) - e^{-rT}\frac{ \sqrt{c} }{n+1},
\end{align}
for a suitable constant $C_2>0$. By (\ref{Tconverg3}) it follows that for each $n \in \nn$ there exists $c(n)\geq\rho^{-2}$ such that
\begin{equation*}
J^c(t,p,z;u) < K,
\end{equation*}
for each $c \geq c(n)$ and $u \in B_n^{tpz}$, with $K$ as in Step 1. By (\ref{Tconverg2}) we thus get
\begin{equation}
\label{Tconverg4}
\sup_{u \in \aaa} J^c(t,p,z;u) =  \sup_{u \in \aaa \setminus \bigcup_{ \{ i \in \nn : c(i)\leq c \} } B_i^{tpz} } J^c(t,p,z;u),
\end{equation}
for each $c \geq \rho^{-2}$. The sequence $\{c(n)\}_n$ is obviously increasing; hence, there exists a function $m$ from $[\rho^{-2}, +\infty[$ to $\nn$ such that $\{ i \in \nn : c(i)\leq c \} = \{1,\dots,m(c)\}$ for each $c\geq\rho^{-2}$. As a consequence, we can rewrite (\ref{Tconverg4}) as follows:
\begin{equation}
\label{Tconverg5}
\sup_{u \in \aaa} J^c(t,p,z;u) =  \sup_{u \in \aaa \setminus \bigcup_{i=1}^{m(c)}B_i^{tpz} } J^c(t,p,z;u).
\end{equation}
Notice that $m(\cdot)$ is increasing and that $m(c)\to +\infty$.

Let $\eps >0$. By (\ref{Tconverg5}), for $c \geq \rho^{-2}$ inequality (\ref{Tconverg1}) is equivalent to
\begin{equation}
\label{Tconverg6}
-\eps \leq \sup_{u \in \aaa \setminus \bigcup_{i=1}^{m(c)}B_i^{tpz} } J^c(t,p,z;u) - \sup_{u \in \aav } J(t,p,z;u) \leq \eps.
\end{equation}
Therefore, we have to prove that there exists $\delta\geq\rho^{-2}$ such that (\ref{Tconverg6}) holds for each $c\geq\delta$ and for each $(t,p,z)\in A$. In Step 3 we will prove the right inequality in (\ref{Tconverg6}), while in Step 4 the left inequality will be proved, thus concluding the proof.

\textit{Step 3: right inequality in (\ref{Tconverg6})}. Let us show that there exists $\delta_1\geq\rho^{-2}$ independent of $(t,p,z)\in A$ such that
\begin{equation}
\label{Tconverg7}
\sup_{u \in \aaa \setminus \bigcup_{i=1}^{m(c)}B_i^{tpz} } J(t,p,z;u) \leq \sup_{u \in \aav } J(t,p,z;u) + \eps,
\end{equation}
for each $c\geq\delta_1$. Since $J^c \leq J$, by (\ref{Tconverg7}) we get the right inequality in (\ref{Tconverg6}).

Let $c\geq\rho^{-2}$, $(t,p,z)\in A$, $u \in \aaa \setminus \bigcup_{i=1}^{m(c)}B_i^{tpz}$. We set
\begin{equation*}
\Pi^{u} = \{Z^u(T) \notin [m,M] \};
\end{equation*}
notice that $0\leq \ppp(\Pi^{u})\leq 1/(m(c)+1)$. Let $\tilde u$ be the process defined in the following way: $\tilde u$ coincides in $\Pi^{u}$ with the process which assures the reachability of $[m+\rho,M-\rho]$ (see the definition of $\dd^\rho$), and $\tilde u\equiv u$ in $\Omega \setminus \Pi^u$. A simple check shows that $\tilde u \in \aaa$ and that
\begin{equation}
\label{Tconverg8}
Z^{\tilde u}(T) \in [m,M] \qquad \ppp\text{-a.s.}
\end{equation}
By recalling that $\tilde u \equiv u$ in $\Omega \setminus \Pi^u$, by the H\"{o}lder inequality (twice) and by estimates as in (\ref{Vpol}), we obtain that
\begin{equation}
\label{Tconverg9}
\big|J(t,p,z;u) - J(t,p,z; \tilde u)\big| \leq C_3(1+|p|^k+|z|^k) \, \ppp(\Pi^{u})^{\frac{1}{2}} \leq\frac{ C_3(1+2R^k)}{(m(c)+1)^{\frac{1}{2}}},
\end{equation}
for some constant $C_3>0$. Then, by (\ref{Tconverg9}) and (\ref{Tconverg8}) it follows that
\begin{equation*}
J(t,p,z;u) \leq J(t,p,z; \tilde u)+ \frac{C_3(1+2R^k)}{(m(c)+1)^{\frac{1}{2}}} \leq \sup_{u \in\aav} J(t,p,z; u)+ \frac{C_3(1+2R^k)}{(m(c)+1)^{\frac{1}{2}}}.
\end{equation*}
This inequality holds for each $(t,p,z)\in A$ and $u \in \aaa \setminus\bigcup_{i=1}^{m(c)}B_i^{tpz}$. Since $m(c) \to +\infty$, for sufficiently large $c$ (and this choice is independent of $(t,p,z)$ and $u$), we have that $C_3(1+2R^k)/(m(c)+1)^{\frac{1}{2}} \leq \eps$, thus obtaining (\ref{Tconverg7}).

\textit{Step 4: left inequality in (\ref{Tconverg6})}. We still have to prove the left inequality in (\ref{Tconverg6}), i.e.
\begin{equation}
\label{Tconverg10}
\sup_{u \in \aav } J(t,p,z;u) \leq \sup_{u \in \aaa \setminus \bigcup_{i=1}^{m(c)}B_i^{tpz} } J^c(t,p,z;u) + \eps,
\end{equation}
for $c\geq\delta_2$, with $\delta_2\geq\rho^{-2}$ independent of $(t,p,z)\in A$.

Let $c\geq\rho^{-2}$, $(t,p,z)\in A$ and $u \in \aav$. By Assumption \ref{assrho}, let $\tilde u\in \aaa$ be such that
\begin{equation}
\label{Tconverg11}
|J(t,p,z;u)- J(t,p,z;\tilde u)|\leq \eps
\end{equation}
and with the property
\begin{equation}
\label{Tconverg11bis}
Z^{\tilde u}(T)\in [m+\eta(\eps),M-\eta(\eps)]\qquad \ppp\text{-a.s.}
\end{equation}
First of all notice that
\begin{equation}
\label{Tconverg12}
\tilde u \in \aav \subseteq \aaa \setminus \textstyle \bigcup_{i=1}^{m(c)}B_i^{tpz}.
\end{equation}
By (\ref{Tconverg11}) we obtain that
\begin{multline*}
\big|J(t,p,z;u) - J^c(t,p,z; \tilde u)\big| \\
= \big|J(t,p,z;u) - J(t,p,z; \tilde u) - \eee\big[e^{-r(T-t)}\Phi^c(Z^{\tilde u}(T))\big] \big|  \leq \eps + \eee\big[|\Phi^c(Z^{\tilde u}(T))|\big].
\end{multline*}
Notice that by (\ref{Tconverg11bis}) the second term equals zero for $c\geq \eta(\eps)^{-2}$ (in fact $\Phi^c\equiv0$ in $[m+c^{-\frac{1}{2}}, M-c^{-\frac{1}{2}}]$); by recalling (\ref{Tconverg12}), we therefore have that
\begin{equation*}
J(t,p,z;u) \leq J^c(t,p,z; \tilde u) + \eps \leq \sup_{u \in \aaa \setminus \bigcup_{i=1}^{m(c)}B_i^{tpz}}J^c(t,p,z; u)+ \eps,
\end{equation*}
for each $c\geq \max\{\eta(\eps)^{-2},\rho^{-2}\}$. Since this inequality holds for each $(t,p,z)\in A$ and $u \in \aav$, we get (\ref{Tconverg10}).
\end{proof}

\begin{corol}
\label{thconv2}
Let the assumptions of Section \ref{COV} hold. Then the functions $V^c$ converge pointwise to $V$ in $\widetilde\dd$ and $V$ is continuous on $\widetilde \dd$.
\end{corol}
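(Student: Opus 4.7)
The plan is to derive the corollary from Theorem \ref{thconv1} together with the continuity of each $V^c$. Recall from Section \ref{COV} that $\widetilde\dd = \bigcup_{0<\rho<(M-m)/2}\dd^\rho$ and that this family is monotone: $\dd^{\rho_1}\subseteq\dd^{\rho_2}$ whenever $\rho_1\geq\rho_2$. Moreover, the sets $\dd$, $\widetilde\dd$ and $\dd^\rho$ depend only on $(t,z)$, not on the variable $p$. Pointwise convergence $V^c(t,p,z)\to V(t,p,z)$ on $\widetilde\dd$ is then immediate: given $(t,p,z)\in\widetilde\dd$, I would pick $\rho$ with $(t,p,z)\in\dd^\rho$ and apply Theorem \ref{thconv1} to the compact singleton $\{(t,p,z)\}\subseteq\dd^\rho$.

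For continuity, I would first invoke that each $V^c$ is continuous on $[0,T]\times\rr^n\times\rr$: this is the standard regularity of value functions of unconstrained stochastic control problems, and it is also implicit in Theorem \ref{propVc}, since the unique polynomially growing viscosity solution of the HJB equation is continuous. The strategy is then to exhibit, around every $(t_0,p_0,z_0)\in\widetilde\dd$, a compact neighborhood $K$ contained in some $\dd^{\rho'}$. On such $K$, Theorem \ref{thconv1} gives uniform convergence $V^c\to V$, so $V|_K$ is a uniform limit of continuous functions and hence continuous; in particular, $V$ is continuous at $(t_0,p_0,z_0)$.

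The step that requires a small argument is the existence of this neighborhood, and I expect this to be the only nonroutine point. Pick $\rho>0$ such that $(t_0,p_0,z_0)\in\dd^\rho$ and let $u_0$ be a Borel measurable control on $[t_0,T]$ with values in $U$ witnessing the reachability of $[m+\rho,M-\rho]$. For $(t,z)$ near $(t_0,z_0)$, I define a Borel control $\tilde u$ on $[t,T]$ by restricting $u_0$ if $t\geq t_0$, or by extending it with any constant value in $U$ on $[t,t_0)$ if $t<t_0$. Since $g$ is continuous on the compact set $[0,T]\times U$, a direct estimate yields
\begin{equation*}
\bigl|Z^{t,z;\tilde u}(T)-Z^{t_0,z_0;u_0}(T)\bigr|\leq|z-z_0|+\|g\|\,|t-t_0|,
\end{equation*}
so $Z^{t,z;\tilde u}(T)\in[m+\rho/2,M-\rho/2]$ for $|t-t_0|$ and $|z-z_0|$ small enough. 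A suitable compact product neighborhood of $(t_0,p_0,z_0)$ therefore lies in $\dd^{\rho/2}$, and combining this with Theorem \ref{thconv1} and the continuity of the $V^c$ finishes the argument.
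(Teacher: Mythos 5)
Your proof is correct and follows the same route as the paper: Corollary \ref{thconv2} is derived from Theorem \ref{thconv1} via the identity $\widetilde\dd=\bigcup_\rho\dd^\rho$. The only difference is one of detail — the paper's proof is a single sentence ("It follows immediately from Theorem \ref{thconv1}..."), whereas you explicitly verify the step that makes it immediate: that every point of $\widetilde\dd$ has a compact neighborhood contained in some $\dd^{\rho'}$, via the restriction/extension of a witnessing control and the bound $|Z^{t,z;\tilde u}(T)-Z^{t_0,z_0;u_0}(T)|\leq|z-z_0|+\|g\|\,|t-t_0|$. That verification is sound and is exactly the content the paper leaves to the reader.
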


\begin{proof}
It follows immediately from Theorem \ref{thconv1} (recall that $\bigcup_\rho \dd^\rho = \widetilde \dd$).
\end{proof}

\begin{corol}
\label{VVsolvisc}
Let the assumptions of Section \ref{COV} hold. Then the function $V$ is a viscosity solution of Equation (\ref{HJBc}) in $\widetilde \dd$.
\end{corol}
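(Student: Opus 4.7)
The plan is to invoke the standard stability result for viscosity solutions: a locally uniform limit of viscosity solutions of a fixed second-order PDE (with continuous Hamiltonian) is again a viscosity solution of that PDE. By Theorem \ref{propVc}, each $V^c$ is a viscosity solution of (\ref{HJBc}) on $[0,T[\times\rr^n\times\rr$. Crucially, while the terminal condition (\ref{HJB2c}) depends on $c$, the PDE (\ref{HJBc}) itself does not, and the terminal data play no role for the viscosity property at interior points. The Hamiltonian in (\ref{HJBc}) is continuous thanks to continuity of $f,\sigma,g,L$ and compactness of $U$, so the classical stability theorem (see e.g.\ \cite{USER}) is applicable.

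By Theorem \ref{thconv1}, $V^c\to V$ uniformly on compact subsets of each $\dd^\rho$, and $\widetilde\dd=\bigcup_\rho\dd^\rho$. To upgrade this to local uniform convergence on $\widetilde\dd\cap([0,T[\times\rr^n\times\rr)$, I would show that every such point $(t_0,p_0,z_0)$ admits a relatively open neighborhood contained in some $\dd^\rho$. Given the point, pick a Borel measurable $u\colon[t_0,T]\to U$ with $Z^{t_0,z_0;u}(T)\in\,]m,M[$, and let $2\rho$ be its distance to $\{m,M\}$. Since $g$ is continuous on the compact set $[0,T]\times U$, it is bounded by some $G\geq 0$. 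The deterministic control on $[t,T]$ obtained by restricting $u$ (if $t\geq t_0$), or by pre-pending a constant value of $U$ on $[t,t_0]$ followed by $u$ (if $t<t_0$), produces a $\tilde u$ with
\begin{equation*}
|Z^{t,z;\tilde u}(T)-Z^{t_0,z_0;u}(T)|\leq G|t-t_0|+|z-z_0|,
\end{equation*}
which is smaller than $\rho$ on a sufficiently small neighborhood of $(t_0,p_0,z_0)$. That neighborhood therefore lies in $\dd^\rho$.

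Putting the two pieces together, on a compact neighborhood of any point of $\widetilde\dd\cap([0,T[\times\rr^n\times\rr)$ the convergence $V^c\to V$ is uniform, hence locally uniform on that set; the stability of viscosity solutions then yields the claim. The main (essentially only) nontrivial step is the openness argument for $\widetilde\dd$; everything else is a direct appeal to the machinery already established in Theorems \ref{propVc} and \ref{thconv1}.
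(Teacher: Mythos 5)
Your proposal is correct and follows the same route as the paper's proof: appeal to Theorem \ref{propVc} for each $V^c$, to Theorem \ref{thconv1} for locally uniform convergence, and then to the stability of viscosity solutions under locally uniform limits. The additional step you supply — that every point of $\widetilde\dd \cap ([0,T[\times\rr^n\times\rr)$ has a relatively open neighborhood contained in some $\dd^\rho$, via boundedness of $g$ on the compact set $[0,T]\times U$ — is a genuine (if small) gap that the paper elides when it writes only that ``$\widetilde\dd=\bigcup_\rho\dd^\rho$,'' since the $\dd^\rho$ need not be relatively open, and stability requires uniform convergence on a neighborhood of the test point.
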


\begin{proof}
Since the functions $V^c$ are viscosity solutions of Equation (\ref{HJBc}) in $[0,T]\times\rr^{n}$ (Theorem \ref{propVc}) and for each $0<\rho<(M-m)/2$ they locally uniformly converge in $\dd^\rho$ to the function $V$ (Theorem \ref{thconv1}), the conclusion follows by the stability property of viscosity solutions with respect to the uniform convergence and the fact that $\widetilde \dd = \bigcup_\rho \dd^\rho$.
\end{proof}

\begin{remark}
We have proved that the value function in the set $\widetilde\dd$ is the locally uniform limit of the functions $V^c$. In some particular cases, stronger conclusions can be achieved: the value function is characterized in its whole domain $\dd$ by the HJB equation. See Section \ref{CAPopzioniswing}.
\end{remark}

\begin{remark}
By standard results in stochastic control theory, it can be proved that the value function does not depend on the probability space we choose.
\end{remark}

We now face the problem of the regularity of the function $V$. In control theory, regularity results are usually achieved by passing to the supremum in estimates on quantities such as $|J(t,p',z;u)-J(t,p'',z;u)|$ or $|J(t,p,z';u)-J(t,p,z'';u)|$, so as to obtain the corresponding inequality for $V$. In the case of constrained problems, this approach cannot be applied to $V(t,p,\cdot)$. In fact, consider $|J(t,p,z';u)-J(t,p,z'';u)|$: on one hand such a quantity is defined only for $u \in \mathcal{A}^{\text{adm}}_{tz'} \cap \mathcal{A}^{\text{adm}}_{tz''}$, on the other hand the supremum should be with respect to different sets (precisely, $\mathcal{A}^{\text{adm}}_{tz'}$ and $\mathcal{A}^{\text{adm}}_{tz''}$). Of course, in particular cases some regularity results can be achieved also for $V(t,p,\cdot)$, see Section \ref{CAPopzioniswing}. The only case when that approach still works regards estimates on $V(t,\cdot,z)$, given that, fixed $t$ and $z$, the set of admissible controls does not depend on $p$. Hence, as for $V(t,\cdot,z)$ we can follow this approach.

\begin{prop}
\label{PrRegVinc1}
Let the assumptions of Section \ref{COV} hold. Assume that there exists a constant $\bar C>0$ such that
\begin{equation}
\label{regvinc1}
|L(t,p,z,v)-L(t,q,z,v)| \leq \bar C|p-q|, \qquad  |\Phi(p,z)-\Phi(q,z)| \leq \bar C|p-q|,
\end{equation}
for each $p,q \in \rr^n$, $t \in[0,T]$, $v \in U$ and $z \in \rr$. Then the function $V(t,\cdot,z)$ is Lipschitz continuous, uniformly in $(t,z)$. Moreover, the gradient $D_p V(t,p,z)$ exists for a.e.~$(t,p,z) \in \dd$ and we have $| D_p V(t,p,z)|\leq M_1$ for some constant $M_1>0$ depending only on $U$, $T$, $\bar C$ and on the constants in (\ref{fs}) and (\ref{cpolfs}).
\end{prop}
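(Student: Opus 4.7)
The proof plan exploits the key structural observation already noted just before the statement: the set $\aav$ of admissible controls depends only on $(t,z)$ and not on $p$, since admissibility is a condition on $Z^{t,z;u}(T) = z + \int_t^T g(s,u(s))\,ds$, which does not involve $p$. Consequently, for fixed $(t,z)$ the standard ``pass-to-the-sup'' technique from unconstrained control theory applies verbatim in the $p$-variable, and one does not encounter the obstruction described in the paragraph preceding the proposition (which concerns $V(t,p,\cdot)$).

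I would proceed as follows. First, fix $(t,z) \in [0,T]\times\rr$ and $p,q \in \rr^n$, and take any $u \in \aav$ (the same set for both $p$ and $q$). Using the Lipschitz hypotheses (\ref{regvinc1}) on $L$ and $\Phi$ in the variable $p$, uniformly in the other arguments, bound
\begin{equation*}
|J(t,p,z;u) - J(t,q,z;u)| \leq \bar C\,\eee\Biggl[\int_t^T e^{-r(s-t)}|P^{t,p;u}(s) - P^{t,q;u}(s)|\,ds + e^{-r(T-t)}|P^{t,p;u}(T)-P^{t,q;u}(T)|\Biggr].
\end{equation*}
Next, invoke the standard SDE stability estimate from \cite[Appendix D]{FS}: by (\ref{fs}) and Gronwall's lemma, there exists a constant $K > 0$ depending only on $T$ and the Lipschitz constant $C$ in (\ref{fs}) such that
\begin{equation*}
\eee\bigl[\|P^{t,p;u}(\cdot) - P^{t,q;u}(\cdot)\|\bigr] \leq K|p-q|,
\end{equation*}
uniformly in $u \in \aaa$. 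Combining these two estimates yields a constant $M_1 > 0$, depending only on $U$, $T$, $\bar C$ and the constants in (\ref{fs}) and (\ref{cpolfs}), such that $|J(t,p,z;u)-J(t,q,z;u)| \leq M_1|p-q|$ for every $u \in \aav$.

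Because this estimate is uniform in the control and $\aav$ is the same for the two initial data $(t,p,z)$ and $(t,q,z)$, passing to the supremum over $u \in \aav$ on both sides gives $|V(t,p,z)-V(t,q,z)| \leq M_1|p-q|$, which is the desired uniform Lipschitz continuity in $p$. The a.e.~existence of $D_p V(t,p,z)$ and the bound $|D_p V(t,p,z)| \leq M_1$ then follow from the Rademacher theorem applied slicewise in $p$ for fixed $(t,z)$, together with a Fubini argument to obtain a.e.~existence on all of $\dd$.

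I do not expect any genuine obstacle: once one observes that $\aav$ is $p$-independent, the proof reduces to the textbook Lipschitz-in-initial-datum argument. The only thing worth stressing in the write-up is precisely this independence, since it is exactly what fails for the $z$-variable (as explained in the paragraph before the proposition) and hence motivates why this approach works here but not for $V(t,p,\cdot)$.
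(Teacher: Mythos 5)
Your proof is correct and follows essentially the same route as the paper: bound $|J(t,p,z;u)-J(t,q,z;u)|$ uniformly in $u$ via the Lipschitz hypotheses (\ref{regvinc1}) and the SDE stability estimate of \cite[Appendix D]{FS}, pass to the supremum (legitimate because $\aav$ is $p$-independent, as the paper remarks just before the statement), and conclude a.e.~differentiability by Rademacher plus Fubini. The only cosmetic difference is that the paper works with increments $p\mapsto p+h\xi$ and quietly takes $u\in\aaa$ rather than $u\in\aav$, but the mathematical content is identical.
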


\begin{proof}
Let $(t,p,z)\in \dd$, $h>0$, $\xi \in \rr^n$ with $|\xi|=1$ and $u \in \aaa$. In order to avoid ambiguity, we will omit the subscripts in the notation of the mean value (initial data are different, but the probability is obviously the same). By (\ref{regvinc1}) and estimates (D.8) in \cite[Appendix D]{FS} we have
\begin{align}
\label{regvinc2}
& |J(t,p,z;u)-J(t,p+h\xi,z;u)| \notag\\
& \leq \bar C \mathbb{E}\left[\int_t^T \big|P^{t,p;u}(s)-P^{t,p+h\xi;u}(s)\big|ds + \big|P^{t,p;u}(T)-P^{t,p+h\xi;u}(T)\big|\right] \notag \\
& \leq \bar C (T-t+1) \mathbb{E}[ \|P^{t,p;u}(\cdot)-P^{t,p+h\xi;u}(\cdot)\| ] \notag \\
& \leq C_1 \bar C (T+1) |p-(p+h\xi)| \notag \\
& = C_1 \bar C (T+1) h,
\end{align}
for some constant $C_1>0$. Estimate (\ref{regvinc2}) holds for each $u \in \aaa$; thus, it follows that
\begin{equation}
\label{regvinc3}
|V(t,p,z)-V(t,p+h\xi,z)|\leq M_0h,
\end{equation}
where $M_0:=C_1 \bar C (T+1)$. The function $V(t,\cdot,z)$ is therefore Lipschitz continuous, uniformly in $(t,z)$, and then a.e.~differentiable by the Rademacher theorem. By classical results it follows that $D_pV(t,p,z)$ exists for a.e.~$(t,p,z) \in \dd$. Finally, if the gradient exists and $e_i \in \rr^n$ is a vector of the canonical basis ($i=1,\dots,n$), by (\ref{regvinc3}) we get
\begin{equation*}
|(D_pV(t,p,z))_i|= \lim_{h \to 0^+} \frac{|V(t,p,z)-V(t,p+h e_i,z)|}{h} \leq M_0,
\end{equation*}
and then the estimate on the gradient immediately follows.
\end{proof}

\begin{prop}
\label{PrRegVinc2}
Let the assumptions of Section \ref{COV} hold. Assume that $\Phi\in C^2(\rr^{n+1})$, that the functions $f(t,\cdot,v)$, $\sigma(t,\cdot,v)$, $L(t,\cdot,\cdot,v)$ are of class $C^{2}$ for each $(t,v) \in [0,T]\times U$ and that there exist constants $\bar C\geq0$, $j \in \mathbb{N}$ such that
\begin{gather*}
|D_p f(t,p,v)|+|D_p^2f(t,p,v)|+|D_p\sigma(t,p,v)|+|D_p^2\sigma(t,p,v)|\leq \bar C, \\
|D_{(p,z)} L(t,p,z,v)|+|D_{(p,z)}^2L(t,p,z,v)|\leq \bar C (1+|p|^j+|z|^j), \\
|D_{(p,z)}\Phi(p,z)|+|D_{(p,z)}^2\Phi(p,z)|\leq \bar C (1+|p|^j+|z|^j),
\end{gather*}
for each $p\in \rr^n$, $t\in [0,T]$, $v \in U$ and $z \in \rr$. The function $V(t,\cdot,z)$ is then locally semiconvex, uniformly in $t$, and a.e.~twice differentiable.
\end{prop}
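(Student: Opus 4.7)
The plan is to reduce local semiconvexity of $V(t,\cdot,z)$ to a uniform semiconvexity estimate for $J(t,\cdot,z;u)$ and then invoke Alexandrov's theorem. The key structural observation, already used in Proposition \ref{PrRegVinc1}, is that $\aav$ depends only on $(t,z)$, so the same admissible $u$ can be used for all three initial conditions $(t,p\pm h\xi,z)$ and $(t,p,z)$. Concretely, I aim to show that for each $R>0$ there exists $C_R>0$ with
\[
J(t,p+h\xi,z;u)+J(t,p-h\xi,z;u)-2J(t,p,z;u)\geq -C_R h^2
\]
for all $t\in[0,T]$, $u\in\aav$, $(p,z)$ with $|p|+|z|\leq R$, unit $\xi\in\rr^n$ and small $h>0$. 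Given such an estimate, for $\eps>0$ pick $u^*\in\aav$ with $J(t,p,z;u^*)\geq V(t,p,z)-\eps$; this same $u^*$ is admissible for $(t,p\pm h\xi,z)$, whence
\[
V(t,p+h\xi,z)+V(t,p-h\xi,z)-2V(t,p,z)\geq -C_R h^2-2\eps,
\]
and $\eps\to 0^+$ yields local semiconvexity. Then $V(t,\cdot,z)$ is a.e.\ twice differentiable by Alexandrov's theorem.

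The probabilistic input is the classical second-order $L^q$ regularity of the stochastic flow. Set $\alpha_h(s)=P^{t,p+h\xi;u}(s)-P^{t,p;u}(s)$, $\beta_h(s)=P^{t,p-h\xi;u}(s)-P^{t,p;u}(s)$ and $\zeta_h(s)=\alpha_h(s)+\beta_h(s)$. The processes $\alpha_h,\beta_h$ solve standard linearised SDEs with initial data $\pm h\xi$, while $\zeta_h$ satisfies an SDE that is linear in $\zeta_h$ with coefficient $D_p f(s,P^{t,p;u},u)$ (resp.\ $D_p\sigma$) plus a forcing term of order $|\alpha_h|^2+|\beta_h|^2$ coming from second-order Taylor of $f,\sigma$. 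Under the assumed boundedness of $D_p f,D_p^2 f,D_p\sigma,D_p^2\sigma$, Gr\"onwall and Burkholder-Davis-Gundy give
\[
\mathbb{E}\bigl[\|\alpha_h\|^{2q}+\|\beta_h\|^{2q}\bigr]\leq C_q h^{2q},\qquad \mathbb{E}\bigl[\|\zeta_h\|^2\bigr]\leq C h^4,
\]
uniformly in $u\in\aaa$ and in $(p,\xi)$ in bounded sets (cf.\ \cite[Appendix D]{FS}). Coupling these with (\ref{momenti}) for $P^{t,p;u}$ and the trivial bound $\|Z^{t,z;u}\|\leq|z|+T\sup_{[0,T]\times U}|g|$, all subsequent expectations are controlled uniformly in $u$.

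Now apply second-order Taylor with integral remainder to $L(s,\cdot,Z^{t,z;u}(s),u(s))$ around $P:=P^{t,p;u}(s)$, writing $Z=Z^{t,z;u}(s)$ for brevity:
\begin{align*}
& L(s,P+\alpha_h,Z,u)+L(s,P+\beta_h,Z,u)-2L(s,P,Z,u) \\
& \quad = D_p L(s,P,Z,u)\cdot\zeta_h+\int_0^1(1-\lambda)\bigl[\alpha_h^\top D_p^2 L(s,P+\lambda\alpha_h,Z,u)\alpha_h+\beta_h^\top D_p^2 L(s,P+\lambda\beta_h,Z,u)\beta_h\bigr]d\lambda,
\end{align*}
and likewise for $\Phi$ at $s=T$. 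Taking expectation: Cauchy-Schwarz together with $|D_p L|\leq\bar C(1+|P|^j+|Z|^j)$ and $\mathbb{E}[\|\zeta_h\|^2]^{1/2}\leq C h^2$ bound the linear term in absolute value by $C_R h^2$, while the quadratic remainder is pointwise $\geq -C_R(|\alpha_h|^2+|\beta_h|^2)(1+|P|^j+|Z|^j)$ and hence in expectation $\geq -C_R h^2$ by H\"older. Integrating over $s\in[t,T]$ against $e^{-r(s-t)}\leq 1$ and adding the $\Phi$-contribution yields the desired uniform inequality, and the semiconvexity transfer described above concludes the proof.

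The main obstacle is the second-order flow estimate $\mathbb{E}[\|\zeta_h\|^2]\leq Ch^4$ uniformly in $u$: this is the quadratic refinement of the first-order Lipschitz bound invoked in Proposition \ref{PrRegVinc1}, and it is precisely this improvement that upgrades the Lipschitz regularity of $V(t,\cdot,z)$ to $C^{1,1}$-type (semiconvex) control. Everything else — the Taylor expansion, the Cauchy-Schwarz bookkeeping against polynomial growth, and the passage to the supremum — is routine once that flow estimate is in hand.
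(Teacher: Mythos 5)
Your proposal is correct and takes essentially the same approach as the paper: the paper's proof absorbs $\Phi$ into the running cost via \cite[Remark IV.6.1]{FS} and then invokes the argument of \cite[Lemma IV.9.1]{FS}, which is exactly the second-order flow estimate $\mathbb{E}[\|\zeta_h\|^2]\leq Ch^4$ together with the Taylor expansion of the cost that you carry out explicitly. The only cosmetic difference is that you handle $\Phi$ by the same Taylor/expectation argument as $L$ rather than first reducing to $\Phi\equiv 0$, and you spell out the passage from the uniform $J$-estimate to the $V$-estimate (which works precisely because $\aav$ does not depend on $p$, as you note).
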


\begin{proof}
Since $\Phi\in C^2_p(\rr^{n+1})$, it is possible to rewrite the problem so that $\Phi \equiv 0$ (see \cite[Remark IV.6.1]{FS}). By arguing as in the proof of \cite[Lemma IV.9.1]{FS} (with minor modifications: the assumptions are slightly different), we get
\begin{equation*}
V(t,p+h\xi,z)+V(t,p-h\xi,z)-2V(t,p,z)\geq -M_2(1+|p|^j)h^2,
\end{equation*}
for each $(t,p,z) \in \dd$, $h>0$ and $\xi \in \rr^n$ with $|\xi|=1$, where $M_2>0$ is a constant. The function $V(t,\cdot,z)$ is therefore locally semiconvex, uniformly in $(t,z)$, and then a.e.~twice differentiable by the Alexandrov theorem.
\end{proof}

\subsection{Examples}
\label{COVesempi}

We now show two wide classes of problems satisfying Assumption \ref{assrho}. We first consider problems where $U$ is a compact interval of $\rr$ and $g(s,v)=v$, so that the constraint is $z+\int_t^T u(s) \in [m,M]$.

\begin{prop}
\label{casogen1}
Let $a,b \in \rr$, with $a<b$. Let the assumptions of Section \ref{COV} hold, with $U=[a,b]$ and $g(s,v)=v$. Moreover, assume that there exist $\Gamma, l >0$ such that for $\xi=f,\sigma$ the following condition holds:
\begin{equation}
\label{richiestafsigma}
|\xi(s,p,v')-\xi(s,p,v'')| \leq \Gamma (1+|p|)|v'-v''|^l, \quad \forall (s,p) \in [0,T]\times\rr^n, \,\,\, \forall v',v''\in U.
\end{equation}
Then Assumption \ref{assrho} is satisfied.
\end{prop}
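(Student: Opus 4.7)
Since $g(s,v) = v$, the map $u \mapsto Z^{t,z;u}$ is affine, so convex combinations of controls yield convex combinations of terminal $Z$-values. This suggests the following construction: given $u \in \aav$, perturb it to $\tilde u := (1-\lambda) u + \lambda u^*_{t,z}$, where $\lambda \in (0,1)$ will be chosen small and $u^*_{t,z}(s) \equiv c_{t,z}$ is the \emph{constant} control with $c_{t,z} \in [a,b]$ such that $z + c_{t,z}(T-t) \in [m+\rho, M-\rho]$. Such a $c_{t,z}$ exists precisely because $(t,p,z)\in A\subseteq\dd^\rho$ combined with $g(s,v)=v$ forces the reachable set of $Z(T)$ from $(t,z)$ to contain $[z+a(T-t),z+b(T-t)]$ and to meet $[m+\rho,M-\rho]$; the boundary case $t=T$ already forces $z\in[m+\rho,M-\rho]$ and needs no perturbation.

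The control $\tilde u$ lies in $\aaa$ (being a convex combination of $[a,b]$-valued processes), and by linearity of $Z$ in $u$,
\begin{equation*}
Z^{t,z;\tilde u}(T) = (1-\lambda)\, Z^{t,z;u}(T) + \lambda\, Z^{t,z;u^*_{t,z}}(T) \in [m+\lambda\rho,\, M-\lambda\rho]\qquad \ppp\text{-a.s.},
\end{equation*}
because $Z^{t,z;u}(T) \in [m,M]$ (as $u\in\aav$) and $Z^{t,z;u^*_{t,z}}(T) \in [m+\rho, M-\rho]$ by construction. Therefore, if we can achieve $|J(t,p,z;u) - J(t,p,z;\tilde u)| \leq \eps$ by a choice $\lambda = \lambda(\eps)$ uniform on $A$ and over $u \in \aav$, Assumption \ref{assrho} follows with $\eta(\eps) := \lambda(\eps)\rho$.

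For the payoff estimate, note first that $|\tilde u - u| = \lambda|u^*_{t,z} - u| \leq \lambda(b-a)$ pointwise, and consequently $|Z^{t,z;u}(s) - Z^{t,z;\tilde u}(s)| \leq \lambda(b-a)T$ pathwise. Combining (\ref{richiestafsigma}) with (\ref{fs}) and applying the standard BDG/Gronwall argument to $P^{t,p;u}-P^{t,p;\tilde u}$ yields
\begin{equation*}
\eee\big[\|P^{t,p;u} - P^{t,p;\tilde u}\|^2\big] \leq K\lambda^{2l}(1+|p|^2),
\end{equation*}
with $K$ depending only on $T,C,\Gamma,b-a$. The main subtlety is then transferring this SDE estimate to the functional $J$: Assumption hypothesis provides a modulus of $L$ only in $(p,z)$ while $L$ is merely continuous in $v$, and $P^{t,p;u}$ is unbounded. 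One handles this by fixing $R>0$ and splitting each expectation in $|J(t,p,z;u)-J(t,p,z;\tilde u)|$ over the event $E_R := \{\|P^{t,p;u}\|\vee\|P^{t,p;\tilde u}\|\leq R\}$ and its complement: on $E_R$ the triples $(s,P,Z,v)$ live in a compact subset of $[0,T]\times\rr^n\times\rr\times U$ on which $L$ and $\Phi$ are uniformly continuous jointly in all variables (by standard compactness), so their increments are controlled by a joint modulus applied to $\|P^u-P^{\tilde u}\|+\|Z^u-Z^{\tilde u}\|_\infty + \lambda(b-a)$; on $E_R^c$, the polynomial bound (\ref{lphiV}) together with Cauchy--Schwarz and Markov's inequality --- using the moment estimate $\eee[\|P^{t,p;u}\|^{2k}]\leq B_{2k}(1+|p|^{2k})$ from (\ref{momenti}) --- makes the contribution of order $R^{-1}(1+|p|^{k})$. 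Choosing first $R$ large and then $\lambda$ small (both uniformly in $(t,p,z)\in A$ by compactness of $A$ and uniformly in $u\in\aav$) gives $|J(t,p,z;u)-J(t,p,z;\tilde u)|\leq \eps$, completing the verification. The critical use of the H\"older hypothesis (\ref{richiestafsigma}) is precisely in the Gronwall step, where it ensures that the contribution of the difference of the diffusion coefficients decays as $\lambda^{2l}$; this is the main obstacle and the reason why the hypothesis cannot be dropped.
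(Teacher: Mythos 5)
Your construction of $\tilde u$ is genuinely different from the paper's, and it is correct. The paper perturbs $u$ additively on an $\omega$-dependent sublevel set (subtracting $\delta^2$ on $E=\{u>a+\delta^2\}$ when $Z^u(T)$ is near $M$, adding on $F$ when near $m$, and then estimating the Lebesgue measure $\mu(E)$ to verify the terminal constraint), whereas you take a global convex combination $\tilde u=(1-\lambda)u+\lambda u^*_{t,z}$ with a \emph{deterministic} constant reference control steering into $[m+\rho,M-\rho]$. Your use of the affine dependence of $Z^{t,z;u}(T)$ on $u$ makes the terminal-constraint step essentially one line ($Z^{\tilde u}(T)$ is the same convex combination of $Z^u(T)\in[m,M]$ and $Z^{u^*}(T)\in[m+\rho,M-\rho]$, hence lands in $[m+\lambda\rho,M-\lambda\rho]$), avoiding the case split over $\Pi^M,\Pi_m$ and the measure estimate. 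The observation that a \emph{constant} $c_{t,z}$ exists is right, since with $g(s,v)=v$ the reachable set from $(t,z)$ at time $T$ is exactly the interval $[z+a(T-t),z+b(T-t)]$. The price of the elegance is scope: your convexity trick does not extend to $g(s,v)=|v|^p$ as in Proposition~\ref{casogen2}, since $|\cdot|^p$ is convex and the lower bound $Z^{\tilde u}(T)\geq m+\lambda\rho$ fails for the convex combination; the paper's additive perturbation is designed so that essentially the same proof covers both cases.

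For the payoff estimate your outline matches the paper's idea (split over a good/bad event, use polynomial growth and Cauchy--Schwarz on the bad event), but one detail is glossed over: on $E_R$ you apply a modulus $\omega_R$ to $\|P^u-P^{\tilde u}\|$, which is random, and then take expectation. Either you should follow the paper and enlarge the good event to also require $\|P^u-P^{\tilde u}\|\leq\delta$ (so the modulus argument is deterministic and the extra probability is handled by Markov), or you should note that a modulus of continuity can be taken concave and invoke Jensen together with $\eee[\|P^u-P^{\tilde u}\|]\lesssim\lambda^l(1+|p|)$. Either fix is routine, so this is a small omission rather than a flaw.
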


\begin{proof}
For the sake of simplicity, in this proof we assume $l=1$ (for the general case, in the definition of $\tilde u$ it suffices to substitute $\delta^2$ by $\delta^i$, where $i >1/l$).

Let $0<\rho<(M-m)/2$, $A$ be a compact subset of $\dd^\rho$, $R>0$ be such that $\overline{\text{B}(0,R)}\supseteq A$, $\eps>0$. Fix $(t,p,z)\in A$ and $u \in \aav$.

Let $\gamma>0$ (it will be afterwards precisely defined). Since the functions $L$ and $\Phi$ are continuous, there exists $\delta = \delta(\eps,\gamma)>0$ such that
\begin{equation}
\label{Tesempio-1}
|L(s,p',z',v')-L(s,p'',z'',v'')|\leq \frac{\eps}{4T} \,\,\quad \text{and} \,\,\quad |\Phi(p',z')-\Phi(p'',z'')| \leq \frac{\eps}{4},
\end{equation}
for each $s\in [0,T]$, for each $p',p'' \in \overline{\text{B}(0,\gamma)}$ with $|p'-p''|\leq \delta$, for each $z',z''\in [m,M]$ with $|z'-z''|\leq T\delta$ and for each $v',v''\in U$ with $|v'-v''|\leq \delta$.

We now define, starting from $u$, a suitable process $\tilde u$. Let $\Pi^M=\{\omega \in \Omega : Z^u(T)\in ]M-\rho/2,M]\}$ and in $\Pi^M$ let $\tilde u$ be defined in the following way:
\begin{equation*}
\tilde u (s) =
\begin{cases}
u(s)-\delta^{2},  &\text{if $s \in E$}, \\
u(s),  &\text{if $s \in [t,T]\setminus E$},
\end{cases}
\end{equation*}
where $E=E(\omega)=\{s\in[t,T]:u(s)-\delta^2 \in ]a,b[\}=\{s\in[t,T]:u(s)>a+\delta^2\}$. Let $\Pi_m=\{\omega \in \Omega : Z^u(T)\in [m,m+\rho/2[\}$ and in $\Pi_m$ let $\tilde u$ be defined in the following way:
\begin{equation*}
\tilde u (s) =
\begin{cases}
u(s)+\delta^2,  &\text{if $s \in F$}, \\
u(s), &\text{if $s \in [t,T]\setminus F$},
\end{cases}
\end{equation*}
where $F=F(\omega)=\{s\in[t,T]:u(s)+\delta^2 \in ]a,b[\}=\{s\in[t,T]:u(s)<b - \delta^2\}$. Finally, in $\Omega \setminus (\Pi^M\cup\Pi_m)$ let
\begin{equation*}
\tilde u \equiv u.
\end{equation*}
We will show that such a process $\tilde u$ satisfies the required properties.

\textit{Step 1}. We  prove that
\begin{equation}
\label{Tesempio0}
Z^{t,z;\tilde u}(T)\in [m+\eta(\eps),M-\eta(\eps)] \qquad \ppp\text{-a.s.},
\end{equation}
for a suitable function $0<\eta\leq(M-m)/2$ depending only on $\rho, T, R, a, b$.

Consider the case $\omega \in \Pi^M$, i.e.
\begin{equation}
\label{Tesempio0bis}
Z^u(T) \in ]M-\rho/2,M].
\end{equation}
Let us first of all notice that
\begin{equation}
\label{Tesempio0ter}
Z^{\tilde u}(T)= z  + \int_t^T \tilde u(s) ds = z + \int_t^T u(s) ds - \delta^2 \mu(E) = Z^u(T) - \delta^2 \mu(E),
\end{equation}
where $\mu$ denotes the Lebesgue measure in $\rr$. We now look for an estimate for $\mu(E)$. By definition of $E$, we have
\begin{equation*}
\int_t^T u(s) ds = \int_E u(s) ds + \int_{[t,T]\setminus E} u(s) ds \leq b \mu(E) + (a + \delta^2)(T-t-\mu(E))
\end{equation*}
and then
\begin{equation}
\label{Tesempio1}
\mu(E) \in \left[ \frac{\int_t^T u(s)ds - (a+\delta^2)(T-t)}{b-a - \delta^2} , T-t \right] \subseteq \left[ \frac{\rho/2-\delta T}{b-a} ,T \right],
\end{equation}
where the inclusion follows by $z+\int_t^T u(s)ds \geq M - \rho/2$ (since $\omega \in \Pi^M$) and $z \leq M - \rho - a(T-t)$ (since $(t,p,z) \in \dd^\rho$). By possibly decreasing $\delta$ (and the choice depends only on $a,b,\rho,T$), we can assume that the lower bound in (\ref{Tesempio1}) is positive. Recall (\ref{Tesempio0ter}): by (\ref{Tesempio0bis}) and (\ref{Tesempio1}) we get
\begin{equation*}
Z^{\tilde u}(T) \in \left]M-\frac{\rho}{2}-\delta^2T, M - \frac{\delta^2\rho/2-\delta^3T}{b-a}\right] \subseteq \left]m+\frac{\rho}{2}, M - \frac{\delta^2\rho/2-\delta^3T}{b-a}\right],
\end{equation*}
where the inclusion follows by $M-\rho/2 > m + \rho/2$ and by assuming $\delta$ sufficiently small. This estimate holds for each $\omega \in \Pi^M$; by arguing in the same way, for each $\omega \in \Pi_m$ we get
\begin{equation*}
Z^{\tilde u}(T) \in \left[ m + \frac{\delta^2\rho/2-\delta^3 \,T}{b-a} , M-\frac{\rho}{2} \right[.
\end{equation*}
Finally, in $\Omega \setminus (\Pi^M\cup\Pi_m)$ we have $Z^{\tilde u}\in[m+\rho/2,M-\rho/2]$. To summarize, condition (\ref{Tesempio0}) is verified with
\begin{equation*}
\eta(\eps)=\min \left\{ \frac{\rho}{2},\frac{\delta(\eps, \gamma)^2\rho/2-\delta(\eps, \gamma)^3 \,T}{b-a} \right\}.
\end{equation*}

\textit{Step 2}. We still have to prove that
\begin{equation}
\label{Tesempio5}
|J(t,p,z;u)-J(t,p,z;\tilde u)| \leq \eps.
\end{equation}
Let $\Pi \subseteq \Omega$ be defined by
\begin{equation*}
\Pi=\{ \| P^u(\cdot) \|\leq \gamma,\| P^{\tilde u}(\cdot) \|\leq \gamma,\| P^u(\cdot) - P^{\tilde u}(\cdot) \|\leq \delta \};
\end{equation*}
first of all, we set for brevity
\begin{multline*}
\Gamma(t,p,z;u,\tilde u) =\int_t^T |L(s,P^u(s),Z^u(s),u(s))-L(s,P^{\tilde u}(s),Z^{\tilde u}(s),\tilde u(s))|ds \\
+ |\Phi(P^u(T),Z^u(T))-\Phi(P^{\tilde u}(T),Z^{\tilde u}(T))|,
\end{multline*}
and notice that
\begin{equation}
\label{Tesempio6}
|J(t,p,z;u)-J(t,p,z;\tilde u)| \leq \eee [\Gamma(t,p,z;u,\tilde u) \mathbbm{1}_\Pi]+ \eee [\Gamma(t,p,z;u,\tilde u) \mathbbm{1}_{\Pi^c}].
\end{equation}
As for the first term in (\ref{Tesempio6}), for $s \in[t,T]$ we have
\begin{equation*}
|Z^u(s)-Z^{\tilde u}(s)|\leq (s-t)\delta^2\leq T\delta \quad\text{and}\quad |u(s)-\tilde u(s)|\leq \delta^2\leq\delta ,
\end{equation*}
so that by (\ref{Tesempio-1}) it follows that
\begin{equation}
\label{Tesempio7}
\eee [\Gamma(t,p,z;u,\tilde u) \mathbbm{1}_\Pi]\leq \left(\int_0^T \frac{\eps}{4T}ds+\frac{\eps}{4}\right) \ppp( \Pi ) = \frac{\eps}{2}\ppp( \Pi )\leq \frac{\eps}{2}.
\end{equation}
We now consider the second term in (\ref{Tesempio6}). By (D.8) in \cite[Appendix D]{FS}, by (\ref{richiestafsigma}) and (\ref{momenti}) we get
\begin{equation}
\label{Tesempio7bis}
\eee [\|P^u(\cdot)-P^{\tilde u}(\cdot) \|] \leq C_1 (1+|p|)\delta^2,
\end{equation}
for a suitable constant $C_1>0$. By the Markov inequality, (\ref{Tesempio7bis}) and (\ref{momenti}) we then get
\begin{multline}
\label{aggiunta}
\ppp(\Pi^c) \leq \eee[\|P^u(\cdot) \|]\gamma^{-1} +\eee[\|P^{\tilde u}(\cdot) \|]\gamma^{-1} + \eee[\|P^u(\cdot)-P^{\tilde u}(\cdot) \|]\delta^{-1} \\
\leq C_2(1+|p|)(\gamma^{-1}+ \delta),
\end{multline}
where $C_2>0$ is a constant. By the H\"{o}lder inequality (twice), estimates as in (\ref{Vpol}) and (\ref{aggiunta}), we obtain that
\begin{align*}
\eee [ \Gamma(t,p,z;u,\tilde u) \mathbbm{1}_{\Pi^c} ]^2 & \leq \eee [ \Gamma(t,p,z;u,\tilde u)^2] \ppp(\Pi^c) \\
& \leq C_3 (1+|p|^{2k}+|z|^{2k})(1+|p|)(\gamma^{-1}+\delta(\eps, \gamma)) \\
& \leq C_4 (1+R^{2k+1})\gamma^{-1}+ C_4 (1+R^{2k+1}) \delta(\eps,\gamma),
\end{align*}
with $C_3,C_4> 0$. First by choosing a suitable $\gamma$ and then by possibly taking a less $\delta$ (and these choices depends only on $R$ and $\eps$), we get
\begin{equation}
\label{Tesempio8}
\eee \left[ \Gamma(t,p,z;u,\tilde u) \mathbbm{1}_{\Pi^c} \right] \leq \frac{\eps}{2}.
\end{equation}
Estimates (\ref{Tesempio6}), (\ref{Tesempio7}) e (\ref{Tesempio8}) imply (\ref{Tesempio5}), thus ending the proof.
\end{proof}

Let us now consider problems where $U$ is a closed ball of $\rr^l$ and $g(s,v)=|v|^p$, so that the constraint is $z+\int_t^T |u(s)|^p \in [m,M]$, with $p \geq 1$.

\begin{prop}
\label{casogen2}
Let $p\geq1$ and $b>0$. Let the assumptions of Section \ref{COV} and (\ref{richiestafsigma}) hold, with $U=\overline{\text{B}(0,b)}\subseteq \rr^l$ and $g(s,v)=|v|^p$. Assumption \ref{assrho} is then satisfied.
\end{prop}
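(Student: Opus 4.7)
The plan is to mimic the construction of Proposition~\ref{casogen1}, replacing the additive perturbations $u\pm\delta^2$ by multiplicative rescalings tailored to the $L^p$-type constraint and to the ball-shaped control set. Fix $0<\rho<(M-m)/2$, a compact $A\subseteq\dd^\rho$ with $A\subseteq\overline{\text{B}(0,R)}$, and $\eps>0$; for $(t,p,z)\in A$ and $u\in\aav$, introduce the same events $\Pi^M=\{Z^u(T)\in\,]M-\rho/2,M]\}$ and $\Pi_m=\{Z^u(T)\in[m,m+\rho/2[\,\}$ as in that proof. We will construct $\tilde u\in\aaa$, depending on a small parameter $\delta=\delta(\eps)$, which rescales $u$ on $\Pi^M$ and on $\Pi_m$ so as to push $Z^{\tilde u}(T)$ uniformly away from both endpoints of $[m,M]$, and leaves $\tilde u=u$ on $\Omega\setminus(\Pi^M\cup\Pi_m)$.

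On $\Pi^M$, to decrease the integral $\int_t^T|u|^p\,ds$, set
\[
\tilde u(s)=(1-\delta)^{1/p}\,u(s),
\]
which gives $|\tilde u(s)|^p=(1-\delta)|u(s)|^p$, $|\tilde u(s)|\le b$ and $Z^{\tilde u}(T)=Z^u(T)-\delta(Z^u(T)-z)$; since $z\le M-\rho$ forces $Z^u(T)-z>\rho/2$ on $\Pi^M$, this produces a decrease of at least $\delta\rho/2$. On $\Pi_m$, to increase the integral, work on the random set $F(\omega)=\{s\in[t,T]:|u(s)|^p\le b^p-\delta\}$ and set
\[
\tilde u(s)=
\begin{cases}
(|u(s)|^p+\delta)^{1/p}\,u(s)/|u(s)|, & u(s)\ne 0,\\
\delta^{1/p}\,\hat e, & u(s)=0,
\end{cases}
\]
with $\hat e$ an arbitrary fixed unit vector of $\rr^l$, and $\tilde u=u$ on $[t,T]\setminus F$. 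Then $|\tilde u(s)|\le b$ everywhere, $|\tilde u(s)|^p=|u(s)|^p+\delta$ on $F$, and a direct pointwise computation in both regimes gives $|u(s)-\tilde u(s)|\le C_p\,\delta^{1/p}$.

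The main obstacle is a deterministic lower bound on $\mu(F)$, uniform in $(t,p,z)\in A$ and $\omega$: without it, the increase $\delta\mu(F)$ of $Z$ on $\Pi_m$ cannot be forced to exceed a positive quantity of order $\delta$. Markov applied to the inequality $\int_t^T|u|^p\,ds<m+\rho/2-z$ (valid on $\Pi_m$) yields $\mu([t,T]\setminus F)\le(m+\rho/2-z)/(b^p-\delta)$; on the other hand, the definition of $\dd^\rho$ forces $b^p(T-t)\ge m+\rho-z$, and subtracting gives $\mu(F)\ge\rho/(4b^p)$ once $\delta$ is sufficiently small. This is the step where the reachability hypothesis defining $\dd^\rho$ enters in an essential way, and constitutes the new ingredient beyond Proposition~\ref{casogen1}. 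With these estimates, $Z^{\tilde u}(T)\in[m+\eta(\eps),M-\eta(\eps)]$ a.s.~for a positive function $\eta$ of order $\delta$, and the bound $|J(t,p,z;u)-J(t,p,z;\tilde u)|\le\eps$ follows by a verbatim transcription of Step~2 of the proof of Proposition~\ref{casogen1}: introduce the event $\Pi=\{\|P^u\|,\|P^{\tilde u}\|\le\gamma,\,\|P^u-P^{\tilde u}\|\le\delta\}$, use the pointwise bound on $u-\tilde u$ together with continuity of $L$ and $\Phi$ on $\Pi$, and on $\Pi^c$ use the moment estimate (\ref{momenti}), hypothesis (\ref{richiestafsigma}) (which gives $\eee[\|P^u-P^{\tilde u}\|]\le C(1+|p|)\delta^{l/p}$), and the Markov and H\"older inequalities, choosing $\gamma$ and $\delta$ as appropriate functions of $\eps$.
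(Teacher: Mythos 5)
Your proof is correct, and it follows the same overall blueprint as the paper's: split $\Omega$ into $\Pi^M$, $\Pi_m$ and their complement, perturb $u$ so as to push $Z^{\tilde u}(T)$ strictly inside $[m,M]$, derive a deterministic lower bound on the Lebesgue measure of the set where the perturbation acts from the reachability constraint defining $\dd^\rho$, and then estimate $|J(t,p,z;u)-J(t,p,z;\tilde u)|$ via the moment bound (\ref{momenti}), hypothesis (\ref{richiestafsigma}) and a modulus-of-continuity argument as in Step 2 of Proposition \ref{casogen1}. Where you differ is in the choice of perturbation. The paper perturbs $|u|$ additively, setting $|\tilde u| = |u| \mp \delta^2$ on a level set of $|u|$, and then invokes the algebraic inequality $(\zeta\mp\delta^2)^p\lessgtr\zeta^p\mp\delta^{2p}$ to pass from a control of $|\tilde u|$ to a control of $|\tilde u|^p$; this produces a displacement of $Z^{\tilde u}(T)$ of order $\delta^{2p}\mu(E)$ and needs the $\mu(E)$ bound on both $\Pi^M$ and $\Pi_m$. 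You instead perturb $|u|^p$ directly: a multiplicative rescaling $\tilde u=(1-\delta)^{1/p}u$ on $\Pi^M$, which yields $Z^{\tilde u}(T)-z=(1-\delta)(Z^u(T)-z)$ and therefore a guaranteed decrease of $\delta\rho/2$ with no measure estimate at all on $\Pi^M$, and an additive shift of $|u|^p$ by $\delta$ on $\Pi_m$, which avoids the algebraic inequality. This gives a slightly cleaner argument with $\eta(\eps)$ of order $\delta$ rather than $\delta^{2p}$, at the minor cost of treating the $u(s)=0$ case by hand and checking the subadditivity bound $(|u|^p+\delta)^{1/p}-|u|\leq\delta^{1/p}$. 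Both perturbations are bounded by $C\,\delta^{1/p}$ in the control variable and hence feed into (\ref{richiestafsigma}) and the Step 2 estimates in the same way, so the conclusion is the same.

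One small remark: the constant $C_p$ in your pointwise bound $|u(s)-\tilde u(s)|\leq C_p\,\delta^{1/p}$ can be taken to be $\max\{1,b\}$, since on $\Pi_m$ the subadditivity of $x\mapsto x^{1/p}$ gives $(|u|^p+\delta)^{1/p}-|u|\leq\delta^{1/p}$ in all cases including $u(s)=0$, while on $\Pi^M$ one has $1-(1-\delta)^{1/p}\leq\delta\leq\delta^{1/p}$ for $\delta\leq 1$. Also note that, exactly as in the paper's own construction, $\tilde u$ is built from the $\mathcal F_T$-measurable events $\Pi^M,\Pi_m$, which is acceptable here because the paper adopts the same convention.
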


\begin{proof}
The proof is similar to the one of Proposition \ref{casogen1}, with the following modifications:
\begin{itemize}
\item[-] In the definitions of $E$ and $F$ replace $u(s)$ by $|u(s)|$. Process $\tilde u$ in $E$ is now
    \begin{equation*}
    \tilde u(s)= u(s)- \delta^2 \frac{u(s)}{|u(s)|}.
    \end{equation*}
    Notice that $|\tilde u (s)| = |u(s)| - \delta^2$. Similarly in $F$.
\item[-] It is easy to check that
    \begin{align*}
    (\zeta-\delta^2)^p\leq \zeta^p - \delta^{2p}, & \qquad \text{for}\,\, \zeta\geq\delta^2, \\
    (\zeta+\delta^2)^p\geq \zeta^p + \delta^{2p}, & \qquad \text{for}\,\, \zeta\geq0.
    \end{align*}
    By the first estimate, in $\Pi^M$ we have that
    \begin{align*}
    Z^{\tilde u}(T) & = z  + \int_t^T |\tilde u(s)|^p ds  \\
    & = z + \int_E (|u(s)|-  \delta^2)^p ds + \int_{[t,T]\setminus E} |u(s)|^p ds \\
    & \leq z + \int_t^T |u(s)|^p ds - \delta^{2p} \mu(E),
    \end{align*}
    and then we can argue as in the proof of Proposition \ref{casogen1}. As for $\Pi_m$, use the second estimate and the same argument.
\end{itemize}
\end{proof}

\section{Swing contracts with strict constraints}
\label{CAPopzioniswing}

We now use the results of Section \ref{CAPcontrottvinc} to study the problem of optimally exercising swing contracts with strict constraints (see the Introduction). In this case we will obtain 
results stronger than the general ones proved in Section \ref{COVpropfv}.

\subsection{Formulation of the problem}
\label{SWING}

Let $T>0$, ($\Omega$, $\mathcal{F}_T$, $\{\mathcal{F}_s\}_{s\in[0,T]}$, $\mathbb{P}$), $U$, $\aaa$, $P^{t,p}$ and $Z^{t,z;u}$ be as in Section \ref{CAPopzioniswingpenal}. If $(t,p,z)\in[0,T]\times \rr^2$ and $s \in [t,T]$, recall, in particular, that $P^{t,p}(s)$ models the price of energy at time $s$ and that $Z^{t,z;u}(s)$ represents the energy bought up to time $s$, where $u \in \aaa$ is the usage strategy from time $t$ on.

Given $m,M\geq0$ with $m<M$, here we ask the following constraint to hold:
\begin{equation*}
Z^{t,z;u}(T)\in[m,M] \qquad \ppp\text{-a.s.}
\end{equation*}
The problem of the optimal exercise of this contract (i.e.~to find a process $u$ satisfying all the conditions and providing the maximal expected earning) is clearly a constrained stochastic optimal control problem as described in Section \ref{COV} (here $P$ does not depend on $u$), whose value function is
\begin{equation*}
V(t,p,z)=\sup_{u \in \aav}\eee\left[\int_t^T e^{-r(s-t)}(P^{t,p}(s)-K)u(s) ds\right].
\end{equation*}

In this problem the sets $\dd, \widetilde\dd, \dd^\rho$ are formed by all points $(t,p,z)\in[0,T]\times\rr^2$ such that $(t,z)$ belongs, respectively, to the marked surfaces in Figure \ref{PICsets}.
\begin{figure}[htb]
\centering
\includegraphics[width=0.7\columnwidth]{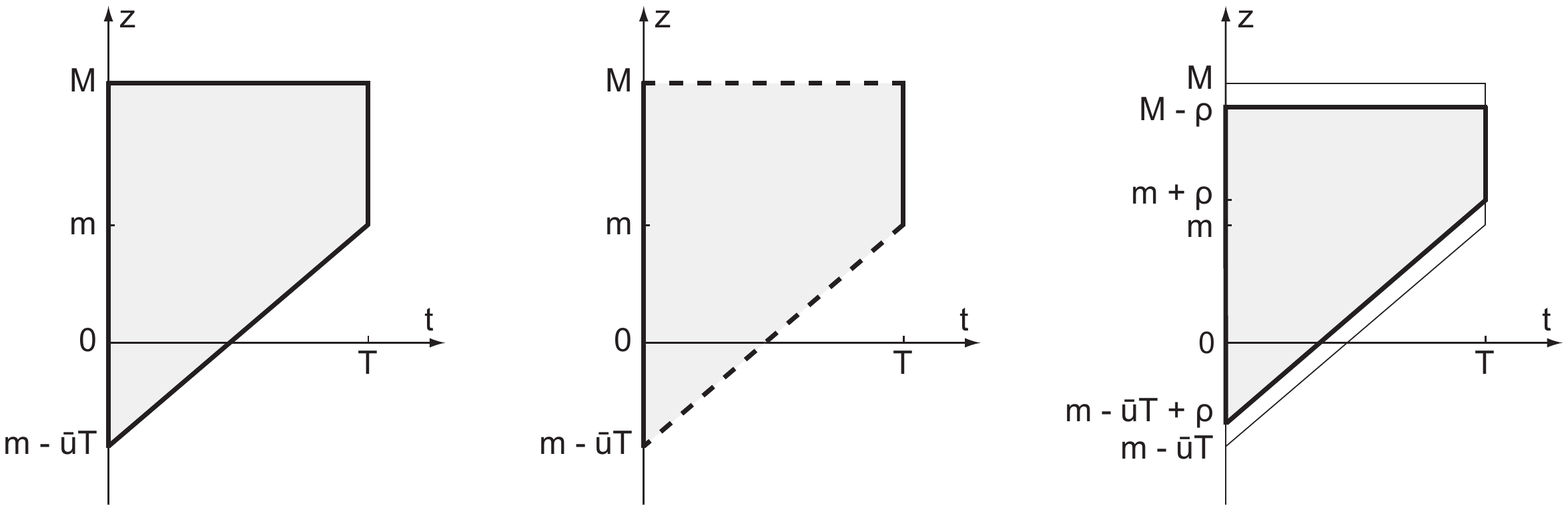}
\caption{the sets $\dd, \widetilde\dd, \dd^\rho$}
\label{PICsets}
\end{figure}
\\ More in details, we have
\begin{gather*}
\dd = \{(t,p,z) \in [0,T]\times \rr^2 : m-\bar u (T-t)\leq z \leq M\}, \\
\widetilde \dd = \{(t,p,z) \in [0,T]\times \rr^2 : m-\bar u (T-t)< z < M\},\\
\dd^\rho = \{(t,p,z) \in [0,T]\times \rr^2 : m+\rho-\bar u (T-t)\leq z \leq M-\rho\}.
\end{gather*}
Notice that these sets include initial data that are inconsistent with the practical problem: in fact, our mathematical formulation 
admits negative starting values for $p$ and $z$.

The functions $V^c$ are here defined by
\begin{equation*}
V^c(t,p,z)=\sup_{u \in \aaa}\eee\left[\int_t^T e^{-r(s-t)}(P^{t,p}(s)-K)u(s) ds + e^{-r(T-t)}\Phi^c(Z^{t,z;u}(T))\right],
\end{equation*}
for each $c>0$ and $(t,p,z)\in[0,T]\times \rr^{2}$, where $\Phi^c$ is defined in (\ref{phic}). This has also a nice economical interpretation: in fact, here we are approximating a swing contract with the strict constraint $Z(T) \in [m,M]$ with a sequence of suitable contracts with increasing penalties for $Z(T) \notin \left[m + \frac{1}{\sqrt{c}}, M - \frac{1}{\sqrt{c}}\right]$.

The HJB equation for the function $V^c$ is
\begin{multline}
\label{equaz}
-V^c_t(t,p,z) + r V^c(t,p,z) - f(t,p)V^c_p(t,p,z) - \frac{1}{2}\sigma^2(t,p)V_{pp}^c(t,p,z) \\
+ \min_{v \in [0,\bar u]}\left[ -v (V^c_z(t,p,z) + p-K)  \right]=0, \qquad \forall (t,p,z) \in [0,T[\times\rr^{2},
\end{multline}
with final condition
\begin{equation*}
V^c(T,p,z)=\Phi^c(z), \qquad \forall (p,z) \in \rr^{2}.
\end{equation*}

\subsection{Properties of the value function}
\label{SWINGprop}

The problem described in Section \ref{SWING} belongs to the class treated in Proposition \ref{casogen1}. Therefore Theorem \ref{thconv1}, Corollary \ref{thconv2} and Corollary \ref{VVsolvisc} hold, but it turns out that in this case we can strengthen such results.

We set for brevity
\begin{equation*}
\alpha=\{(t,p,z) \in \dd : z=M \}, \,\,\, \beta=\{(t,p,z)\in \dd : z+\bar u (T-t)=m \}, \,\,\, \gamma = \{T\} \times \rr \times [m,M],
\end{equation*}
so that $\dd \setminus \widetilde \dd = \alpha \cup \beta$.

Let us first consider Theorem \ref{thconv1} and adapt it to our problem, as here something about $\dd \setminus \widetilde \dd$ can also be said.

\begin{prop}
\label{PrStrict1}
Let the assumptions of Section \ref{SWING} hold. The functions $V^c$ converge to $V$ uniformly on compact subsets of $\widetilde \dd$ . Moreover, if $(t,p,z)\in \alpha$ we have $V(t,p,z)=0$. Finally, if $(t,p,z)\in \beta$ we have
\begin{equation}
\label{Tcont0}
V(t,p,z)=\bar u \, \eee\left[\int_t^T e^{-r(s-t)}(P^{t,p}(s)-K) ds  \right] =:\xi(t,p).
\end{equation}
\end{prop}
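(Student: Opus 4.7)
The plan is to split the statement into three essentially independent claims and handle each separately. For the uniform convergence on compacts of $\widetilde{\dd}$, I would piggyback on Theorem \ref{thconv1}, which already gives uniform convergence on compacts of each $\dd^\rho$. Given any compact $K \subseteq \widetilde \dd$, consider the continuous function $(t,p,z) \mapsto \min\{M-z,\, z+\bar u(T-t)-m\}$; it is strictly positive on $\widetilde \dd$ and thus attains a positive minimum $\rho_0 > 0$ on $K$. From the explicit description of $\dd^\rho$ in Section \ref{SWING}, this gives $K \subseteq \dd^{\rho_0}$, and Theorem \ref{thconv1} applied with $\rho=\rho_0$ yields the claim. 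This step is almost formal once one notices that the two boundary functions defining $\widetilde \dd$ and $\dd^\rho$ differ only by the shift $\rho$.

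For the second claim, fix $(t,p,z)\in\alpha$, so $z=M$. Any $u\in\aav$ must satisfy $Z^{t,M;u}(T)=M+\int_t^T u(s)\,ds\in[m,M]$ a.s. Since $u\geq 0$, the integral is nonnegative, while the upper bound $\leq M$ forces $\int_t^T u(s)\,ds=0$ a.s.; by Fubini, $u(s)=0$ for Lebesgue-a.e. $s$, $\ppp$-a.s. Substituting in the objective functional (which has no terminal term, only the integral against $u$) gives $J(t,p,M;u)=0$, hence $V(t,p,M)=0$.

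For the third claim, fix $(t,p,z)\in\beta$, i.e.\ $z=m-\bar u(T-t)$. For any $u\in\aav$ one has
\begin{equation*}
Z^{t,z;u}(T)=z+\int_t^T u(s)\,ds \leq z+\bar u(T-t)=m,
\end{equation*}
because $u\leq\bar u$, while admissibility also requires $Z^{t,z;u}(T)\geq m$ a.s. Equality holds a.s., which forces $u(s)=\bar u$ for Lebesgue-a.e.\ $s\in[t,T]$, $\ppp$-a.s. Plugging the constant control $u\equiv\bar u$ into the objective functional yields exactly the expression in (\ref{Tcont0}). The main (and rather mild) care needed is in justifying the a.e.\ pointwise conclusions from the integral identities via Fubini, and in noting that once $u$ is pinned down $\ppp$-a.s., the supremum reduces trivially to the value of $J$ at this unique admissible control.
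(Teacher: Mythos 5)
Your proof is correct and follows essentially the same approach as the paper's (the paper's version of the argument is just more terse): compact subsets of $\widetilde\dd$ are contained in some $\dd^\rho$ so Theorem \ref{thconv1} applies, and on $\alpha$ and $\beta$ the only admissible controls are (up to a.e.\ equality) $u\equiv 0$ and $u\equiv\bar u$, respectively. The only microscopic point worth noting is that when you set $\rho_0 := \min_K \min\{M-z,\,z+\bar u(T-t)-m\}$, you should replace $\rho_0$ by $\min\{\rho_0,(M-m)/4\}$ (say) to ensure it lies in the range $\bigl]0,(M-m)/2\bigr[$ for which $\dd^\rho$ is defined; the inclusion $K\subseteq\dd^\rho$ then still holds because $\dd^\rho$ grows as $\rho$ decreases.
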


\begin{proof}
As for the first part, notice that each compact subset of $\widetilde \dd$ is contained in some $\dd^\rho$ and use Theorem \ref{thconv1}. Second and third items: in $\alpha \cup \beta$ there exists a unique admissible control, respectively $u\equiv 0$ and $u \equiv \bar u$.
\end{proof}

Notice that the boundary condition 
$\xi$ in (\ref{Tcont0}) is continuous and can be computed in many models used in practice (see \cite{BLN}).

Corollary \ref{thconv2} assures continuity of $V$ on $\widetilde \dd$. We now prove that in this case a stronger result holds, i.e. the value function is continuous on the whole domain $\dd$. For this, we first need a technical lemma, where we give a bound for the mean distance between solutions of (\ref{Pcasopartic}) starting from different data.

\begin{lemma}
\label{lemma}
Let the assumptions of Section \ref{SWING} hold. Let $t_1,t_2\in[0,T]$ with $t_1<t_2$ and $p_1,p_2\in \rr^n$. Then
\begin{equation*}
\mathbb{E}\big[\big|P^{t_1,p_1}(s)-P^{t_2,p_2}(s)\big|\big] \leq M\big[|p_2-p_1|+(t_2 -t_1)^{\frac{1}{2}}(1+|p_1|)\big],
\end{equation*}
for each $s \in [t_2,T]$, where $\mathbb{E}$ denotes the mean value with respect to the probability $\mathbb{P}$ and $M>0$ is a constant depending only on $T,U$ and on the constants in (\ref{fs}) and (\ref{cpolfs}).
\end{lemma}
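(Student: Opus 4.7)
The plan is to use the semigroup / pathwise uniqueness of the SDE to reduce to comparing two solutions starting at the common time $t_2$ from different initial data, one of which is the random value $\tilde p:=P^{t_1,p_1}(t_2)$.

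More precisely, first I would use pathwise uniqueness of strong solutions of \eqref{Pcasopartic} (together with the $\{\mathcal F_s\}$-adaptedness of $P^{t_1,p_1}$) to write, for every $s\in[t_2,T]$,
\begin{equation*}
P^{t_1,p_1}(s) = P^{t_2,\tilde p}(s)\qquad \mathbb{P}\text{-a.s.}
\end{equation*}
Next I would invoke the standard $L^2$-stability estimate for SDEs with Lipschitz coefficients satisfying \eqref{fs}--\eqref{cpolfs} (the same kind of bound used as estimate (D.8) of \cite{FS}, which is proved via It\^o's isometry/BDG and Gronwall and which extends to $\mathcal F_{t_2}$-measurable initial data by conditioning): for some constant $M_0>0$ depending only on $T,U$ and the constants in \eqref{fs}--\eqref{cpolfs},
\begin{equation*}
\mathbb{E}\bigl[\bigl|P^{t_2,\tilde p}(s)-P^{t_2,p_2}(s)\bigr|^2\bigr] \;\leq\; M_0\,\mathbb{E}\bigl[|\tilde p-p_2|^2\bigr].
\end{equation*}

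Then I would bound $\mathbb{E}[|\tilde p-p_1|^2]$. Writing
\begin{equation*}
\tilde p-p_1 = \int_{t_1}^{t_2} f(\tau,P^{t_1,p_1}(\tau))\,d\tau + \int_{t_1}^{t_2}\sigma(\tau,P^{t_1,p_1}(\tau))\,dW(\tau),
\end{equation*}
using Cauchy--Schwarz on the drift part, It\^o's isometry on the diffusion part, the linear growth \eqref{cpolfs}, and the moment bound \eqref{momenti} applied to $P^{t_1,p_1}$, I obtain
\begin{equation*}
\mathbb{E}\bigl[|\tilde p-p_1|^2\bigr] \;\leq\; M_1\,(t_2-t_1)\,(1+|p_1|^2)
\end{equation*}
for some $M_1>0$ depending only on the stated data. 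A triangle inequality then gives
\begin{equation*}
\mathbb{E}\bigl[|\tilde p-p_2|^2\bigr] \;\leq\; 2\,\mathbb{E}\bigl[|\tilde p-p_1|^2\bigr] + 2|p_1-p_2|^2 \;\leq\; 2M_1(t_2-t_1)(1+|p_1|^2) + 2|p_1-p_2|^2.
\end{equation*}

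Finally, combining the above three displays and applying the elementary inequality $\mathbb{E}[|X|]\leq (\mathbb{E}[|X|^2])^{1/2}$ together with $(a+b)^{1/2}\leq a^{1/2}+b^{1/2}$ for $a,b\ge 0$, I arrive at
\begin{equation*}
\mathbb{E}\bigl[\bigl|P^{t_1,p_1}(s)-P^{t_2,p_2}(s)\bigr|\bigr] \;\leq\; M\bigl[\,|p_2-p_1|+(t_2-t_1)^{1/2}(1+|p_1|)\bigr],
\end{equation*}
with $M$ depending only on $T,U$ and the constants in \eqref{fs}--\eqref{cpolfs}.

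The only nontrivial point is making sure the $L^2$-stability estimate of step 2 is applicable when the initial condition $\tilde p$ at time $t_2$ is random (rather than deterministic). This is standard: the estimate in \cite[Appendix D]{FS} is obtained by writing the difference equation between the two solutions on $[t_2,T]$, applying It\^o's isometry to the diffusion term, H\"older's inequality to the drift term, and Gronwall's lemma in the function $\phi(s):=\mathbb{E}[|P^{t_2,\tilde p}(s)-P^{t_2,p_2}(s)|^2]$; since $\tilde p\in L^2$ (by \eqref{momenti}) and the argument uses only the Lipschitz property of $f,\sigma$ in the space variable, uniformly in time, it goes through verbatim with random initial data. This is the step I expect to require the most care to justify cleanly; the rest of the proof is routine.
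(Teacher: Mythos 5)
Your proposal is correct and follows essentially the same route as the paper: both obtain the $L^2$ estimate $\mathbb{E}[|P^{t_1,p_1}(s)-P^{t_2,p_2}(s)|^2]\leq C\big(|p_2-p_1|^2+(t_2-t_1)(1+|p_1|^2)\big)$ and then pass to $L^1$ via Cauchy--Schwarz (Hölder). The only difference is that the paper simply cites standard SDE stability estimates from \cite[Appendix~D]{FS} and \cite{KR} for the $L^2$ bound, whereas you reconstruct that bound explicitly via the flow property, Itô isometry/Gronwall, and the moment estimate.
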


\begin{proof}
Let $t_1,t_2\in[0,T]$ with $t_1<t_2$, $s\in[t_2,T]$ and $p_1,p_2\in \rr^n$. By standard estimates on stochastic differential equations (see e.g.~\cite[Appendix D]{FS} or \cite{KR}) we arrive at
\begin{multline*}
\mathbb{E}\big[| P^{t_1,p_1} (s) - P^{t_2,p_2}(s)|^2\big] \leq \big(C_1|p_2-p_1|^2 + C_1(t_2-t_1)(1+|p_1|^2)\big) e^{C_1(s-t_2)} \\
\leq C_2|p_2-p_1|^2+C_2(t_2-t_1)(1+|p_1|^2),
\end{multline*}
where $C_1,C_2>0$ are constants. Hence, by the H\"{o}lder inequality
\begin{equation*}
\mathbb{E}\big[| P^{t_1,p_1} (s) - P^{t_2,p_2} (s)|\big] \leq\mathbb{E}\big[| P^{t_1,p_1} (s) - P^{t_2,p_2}(s)|^2\big] ^{\frac{1}{2}} \leq C_3|p_2-p_1|+ C_3(t_2-t_1)^{\frac{1}{2}}(1+|p_1|),
\end{equation*}
for some constant $C_3>0$.
\end{proof}

\begin{prop}
\label{PrStrict2}
Let the assumptions of Section \ref{SWING} hold. Then $V$ is continuous on $\dd$.
\end{prop}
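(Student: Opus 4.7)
The plan is to reduce the problem to continuity at the two boundary pieces $\alpha$ and $\beta$, since Corollary \ref{thconv2} already handles $\widetilde\dd=\dd\setminus(\alpha\cup\beta)$. The two sets are disjoint ($\alpha\cap\beta$ would force $\bar u(T-t_0)=m-M<0$), so I treat them separately, using in each case the explicit formula for $V$ given by Proposition \ref{PrStrict1} together with a uniform-in-$u$ control on $J$ that vanishes as one approaches the boundary.

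For $(t_0,p_0,z_0)\in\alpha$ (so $z_0=M$), and any sequence $(t_n,p_n,z_n)\to(t_0,p_0,z_0)$ in $\dd$, the pathwise constraint $Z(T)\leq M$ forces $\int_{t_n}^T u(s)\,ds\leq M-z_n$ for every $u\in\aav$. Bounding the integrand pointwise then gives
\[
|J(t_n,p_n,z_n;u)|\ \leq\ (M-z_n)\,\eee\Bigl[\,\sup_{s\in[t_n,T]}|P^{t_n,p_n}(s)-K|\,\Bigr],
\]
uniformly in $u$, and the moment estimate (\ref{momenti}) bounds the right-hand side by a constant times $(M-z_n)(1+|p_n|+K)$, which tends to zero. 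Hence $V(t_n,p_n,z_n)\to 0=V(t_0,p_0,z_0)$.

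For $(t_0,p_0,z_0)\in\beta$ (so $z_0=m-\bar u(T-t_0)$), I compare $J(t,p,z;u)$ with the explicit value $\xi(t,p)$ of (\ref{Tcont0}). Admissibility gives $\int_t^T u(s)\,ds\geq m-z$ while $u\leq\bar u$ gives $\int_t^T u(s)\,ds\leq\bar u(T-t)$, so $\int_t^T(\bar u-u(s))\,ds\leq\bar u(T-t)+z-m$ and, by the same kind of majorization as above,
\[
|\xi(t,p)-J(t,p,z;u)|\ \leq\ \bigl(\bar u(T-t)+z-m\bigr)\,\eee\Bigl[\,\sup_{s\in[t,T]}|P^{t,p}(s)-K|\,\Bigr]
\]
for every $u\in\aav$. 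Since this is a two-sided bound valid for each admissible $u$, it passes to the sup and yields the same estimate on $|\xi(t,p)-V(t,p,z)|$; along the sequence, $\bar u(T-t_n)+z_n-m\to 0$, so the right-hand side vanishes. Combined with the continuity of $\xi$, which follows from Lemma \ref{lemma} applied to the integrand in (\ref{Tcont0}), this gives $V(t_n,p_n,z_n)\to\xi(t_0,p_0)=V(t_0,p_0,z_0)$.

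The argument is essentially direct; the only mildly delicate point is to keep the estimates uniform in the control, which is why I work with the two-sided bound $|\xi-J|\leq\cdots$ holding for every $u\in\aav$, rather than attempting to build specific near-optimal controls at each $(t_n,p_n,z_n)$.
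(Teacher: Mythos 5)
Your proof is correct, and it is genuinely cleaner than the paper's on both boundary pieces. On $\alpha$, the paper truncates the expectation on the event $\{\|P(\cdot)\|\leq\gamma\}$, bounds the two pieces separately (one involving $\gamma+K$, the other involving $\gamma^{-1/2}$ via Markov and H\"older), and then takes a two-stage limit; you instead use the pathwise inequality $\int_{t}^T u(s)\,ds\leq M-z$ together with the estimate $\eee[\|P^{t,p}(\cdot)\|]\leq B_1(1+|p|)$ from (\ref{momenti}) to obtain directly $|J|\leq(M-z)\,\eee[\sup_s|P(s)-K|]$, which is uniform in $u$ and manifestly vanishes as $z\to M$. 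This avoids the auxiliary parameter entirely. On $\beta$, the paper compares $J(t,p,z;u)$ with $\xi(\tilde t,\tilde p)$ at the limit point all at once, decomposing the difference into four terms (including a term controlled by Lemma \ref{lemma} and another again handled by $\gamma$-truncation); you instead first compare $J(t,p,z;u)$ with $\xi(t,p)$ at the \emph{same} base point, where the pathwise bound $\int_t^T(\bar u-u)\,ds\leq\bar u(T-t)+z-m$ gives a uniform-in-$u$ estimate $|\xi(t,p)-J(t,p,z;u)|\leq(\bar u(T-t)+z-m)\,\eee[\sup_s|P^{t,p}(s)-K|]$, and then invoke continuity of $\xi$ (via Lemma \ref{lemma}) as a separate, independent step. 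This separation of concerns --- vanishing slack in the constraint versus regularity of the boundary data --- makes the argument easier to read and isolates exactly where Lemma \ref{lemma} is needed. One small remark: the paper only asserts the continuity of $\xi$ without proof just below (\ref{Tcont0}), so your explicit appeal to Lemma \ref{lemma} for this point is a welcome addition rather than a redundancy.
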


\begin{proof}
As Corollary \ref{thconv2} holds, we have to prove that $V$ is continuous on $\dd \setminus \widetilde \dd = \alpha \cup \beta$.

\textit{Step 1: continuity on $\alpha$.} Let $(\tilde t, \tilde p, \tilde z) \in \alpha$. Since in this case the only admissible control is $u\equiv 0$, we have to prove that
\begin{equation}
\label{Tcont1}
\lim_{\substack{(t,p,z)\to(\tilde t, \tilde p, \tilde z) \\ (t,p,z)\in \dd}} V(t,p,z) = V(\tilde t, \tilde p, \tilde z) = 0.
\end{equation}

Let $(t,p,z)\in \dd$ and $u \in \aav$. Given arbitrary $\gamma>0$, we first of all observe that
\begin{multline}
\label{Tcont2}
\eee\left[\left(\int_t^T |P^{t,p}(s)-K|u(s) ds \right)\mathbbm{1}_{ \{ \|P(\cdot)\|\leq\gamma\} } \right]  \\
\leq (\gamma+K)\eee\left[\int_t^T u(s) ds \right] \leq (\gamma+K) (M-z),
\end{multline}
where in the last passage we have used condition $Z^u(T)\leq M$. By the H\"{o}lder inequality (twice), estimates as in (\ref{Vpol}), the Markov inequality and (\ref{momenti}) we get
\begin{multline}
\label{Tcont3}
\eee\left[\left(\int_{t}^T |P^{t,p}(s)-K|u(s) ds \right)\mathbbm{1}_{ \{ \|P^{t,p}(\cdot)\|>\gamma\} } \right] \\
\leq T\eee \left[\int_{t}^T (P^{t,p}(s)-K)^2 u(s)^2 ds \right]^{\frac{1}{2}} \ppp(\|P^{t,p}(\cdot)\|>\gamma)^{\frac{1}{2}} \leq C_1 (1+|p|)^{\frac{3}{2}}\gamma^{-\frac{1}{2}},
\end{multline}
for some constant $C_1>0$. By (\ref{Tcont2}) and (\ref{Tcont3}) it follows that
\begin{equation}
\label{Tcont4}
\biggl|\sup_{u \in \aav} \eee\left[\int_t^T e^{-r(s-t)}(P^{t,p}(s)-K)u(s) ds\right] \biggr|\\
\leq (\gamma+K)( M-z)+ C_1 (1+|p|)^{\frac{3}{2}}\gamma^{-\frac{1}{2}}.
\end{equation}
Inequality (\ref{Tcont4}) holds for each $\gamma>0$ and for each $(t,p,z)\in \dd$. We get (\ref{Tcont1}) by passing to the limit first as $(t,p,z)\to(\tilde t, \tilde p, \tilde z)$ (recall that $\tilde z =M$) and then as $\gamma \to \infty$.

\textit{Step 2: continuity on $\beta$.} Let $(\tilde t, \tilde p, \tilde z) \in \beta$. Since in $\beta$ function $V$ is as in (\ref{Tcont0}), we have to prove that
\begin{equation}
\label{Tcont5}
\lim_{\substack{(t,p,z)\to(\tilde t, \tilde p, \tilde z) \\ (t,p,z)\in \dd}} V(t,p,z) = V(\tilde t, \tilde p, \tilde z) = \bar u \, \mathbb{E}_{\tilde t \tilde p \tilde z}\left[\int_{\tilde t}^T e^{-r(s-\tilde t)}(P^{\tilde t, \tilde p}(s)-K) ds  \right].
\end{equation}

From now on, we will omit the subscripts in the notation of the mean value (the initial data are different, but the probability is clearly the same). Let $(t,p,z)\in \dd$ (notice that necessarily $t \leq \tilde t$) and fix $u \in \aav$; for simplicity we will write $P = P^{t,p}$ and $\tilde P = P^{\tilde t, \tilde p}$. Since for $s\in [\tilde t,T]$ we have
\begin{multline*}
e^{-r(s-t)} (P(s)-K)u(s)-e^{-r(s-\tilde t)}(\tilde P(s)-K)\bar u \\
= e^{-r(s-t)}(P(s)-\tilde P(s))u(s)-e^{-r(s-t)}(\tilde P(s)-K)(\bar u -u(s)) \\
- (e^{-r(s-\tilde t)}-e^{-r(s- t)})(\tilde P(s)-K)\bar u ,
\end{multline*}
let us first of all observe that
\begin{align}
\label{Tcont7}
& \mathbb{E}\biggr[\biggr|\int_t^T e^{-r(s- t)}(P(s)-K)u(s) ds  - \bar u \int_{\tilde t}^T e^{-r(s- \tilde t)}(\tilde P(s)-K) ds  \biggr| \biggr] \nonumber \\
& \leq \mathbb{E}\biggr[\int_t^{\tilde t} |P(s)-K|u(s) ds \biggr]  + \mathbb{E}\biggr[\int_{\tilde t}^T \big| e^{-r(s-t)} (P(s)-K)u(s)-e^{-r(s-\tilde t)}(\tilde P(s)-K)\bar u \big| ds  \biggr] \nonumber \\
& \leq \mathbb{E}\biggr[\int_t^{\tilde t} |P(s)-K|u(s) ds\biggr] + \mathbb{E}\biggr[\int_{\tilde t}^T |P(s)-\tilde P(s)|u(s)ds\biggr]  \\
& \qquad+ \mathbb{E}\biggr[\int_{\tilde t}^T  |\tilde P(s)-K|(\bar u -u(s)) ds \biggr] + \mathbb{E}\biggr[\int_{\tilde t}^T  \big(e^{-r(s-\tilde t)}-e^{-r(s- t)}\big)|\tilde P(s)-K|\bar u ds \biggr]. \nonumber
\end{align}
Consider the first term in (\ref{Tcont7}). By estimates as in (\ref{Vpol}) we have that
\begin{equation}
\label{Tcont8}
\mathbb{E}\biggr[\int_t^{\tilde t} |P(s)-K|u(s) ds\biggr] \leq C_2(\tilde t - t)(1+|p|),
\end{equation}
for some constant $C_2>0$. As for the second term in (\ref{Tcont7}), by the Fubini-Tonelli theorem and Lemma \ref{lemma} we get
\begin{equation}
\mathbb{E}\biggr[\int_{\tilde t}^T |P(s)-\tilde P(s)|u(s)ds\biggr] \leq C_3\big[|p-\tilde p|+(\tilde t -t)^{\frac{1}{2}}(1+|p|)\big],
\end{equation}
where $C_3>0$ is a constant. Let us now estimate the third term in (\ref{Tcont7}). Given arbitrary $\gamma>0$, we observe that
\begin{multline}
\mathbb{E}\left[\left(\int_{\tilde t}^T |\tilde P(s)-K|(\bar u - u(s)) ds \right)\mathbbm{1}_{ \{ \|\tilde P(\cdot)\|\leq\gamma\} } \right]
\leq (\gamma+K) \mathbb{E}\left[\int_{\tilde t}^T ( \bar u - u(s)) ds \right] \\
= (\gamma+K) \mathbb{E}\left[\bar u (T-t) - \int_t^T u(s) ds \right] \leq (\gamma+K)\big(\bar u (T-t) - m + z\big),
\end{multline}
where in the last passage we have used condition $Z^u(T)\geq m $. By arguing as in (\ref{Tcont3}), we get
\begin{equation}
\mathbb{E}\left[\left(\int_{\tilde t}^T |\tilde P(s)-K|(\bar u - u(s)) ds \right)\mathbbm{1}_{ \{ \|\tilde P(\cdot)\|>\gamma\} } \right] \leq C_4 (1+|\tilde p|)^{\frac{3}{2}}\gamma^{-\frac{1}{2}},
\end{equation}
for some constant $C_4>0$. We finally consider the fourth term in (\ref{Tcont7}). By local Lipschitzianity of the exponential function and by estimates as in (\ref{Vpol}) we obtain
\begin{equation}
\label{Tcont12}
\mathbb{E}\biggr[\int_{\tilde t}^T  \big(e^{-r(s-\tilde t)}-e^{-r(s- t)}\big)|\tilde P(s)-K|\bar u ds \biggr] \leq C_5(\tilde t - t)(1+|\tilde p|),
\end{equation}
where $C_5>0$ is constant.

By estimates from (\ref{Tcont8}) to (\ref{Tcont12}), it follows from (\ref{Tcont7}) that
\begin{multline}
\label{Tcont13}
\biggr|\sup_{u \in \aav}\mathbb{E}\biggr[\int_t^T e^{-r(s-t)}(P(s)-K)u(s) ds  \biggr]- \bar u \, \mathbb{E}\biggr[\int_{\tilde t}^T e^{-r(s-\tilde t)}(\tilde P(s)-K) ds  \biggr] \biggr|  \\
\leq C_2(\tilde t - t)(1+|p|) + C_3\big [|p-\tilde p|+(\tilde t -t)^{\frac{1}{2}}(1+|p|)\big] + C_5(\tilde t - t)(1+|\tilde p|) \\
+ (\gamma+K)\big(\bar u (T-t) - m + z \big) + C_4 (1+|\tilde p|)^{\frac{3}{2}}\gamma^{-\frac{1}{2}}.
\end{multline}
Estimate (\ref{Tcont13}) holds for each $\gamma>0$ and for each $(t,p,z)\in \dd$. We get (\ref{Tcont5}) by passing to the limit first as $(t,p,z)\to(\tilde t, \tilde p, \tilde z)$ (recall that $\tilde z+\bar u (T-\tilde t)=m$) and then as $\gamma \to \infty$.
\end{proof}

Let us now consider the HJB equation and prove a result which is stronger than Corollary \ref{VVsolvisc}: in this case the value function is, in its whole domain $\dd$, the unique viscosity solution of the HJB equation with polynomial growth and the boundary conditions given below. Thus, we get another characterization of the value function, in addition to 
the one of Proposition \ref{PrStrict1}.

\begin{thm}
\label{PrStrict3}
Let the assumptions of Section \ref{SWING} hold. Then the function $V$ is the unique viscosity solution of Equation (\ref{equaz}) in the domain $\dd  \setminus (\alpha \cup \beta \cup \gamma)$, with boundary conditions
\begin{align}
\label{HJB2swing}
& V(t,p,z)=0, \quad\qquad \forall(t,p,z) \in \alpha, \nonumber \\
& V(t,p,z)=\xi(t,z), \quad\qquad \forall(t,p,z) \in \beta, \\
& V(T,p,z)=0, \quad\qquad \forall(p,z) \in \rr \times [m,M], \nonumber
\end{align}
such that
\begin{equation}
\label{cpolswing}
|V(t,p,z)| \leq \check C (1+|p|^2+|z|^2), \qquad \forall (t,p,z) \in \dd,
\end{equation}
for some constant $\check C >0$.
\end{thm}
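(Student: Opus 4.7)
My plan is to split the proof into existence and uniqueness. For existence, I would simply collect the pieces already proved: by Corollary~\ref{VVsolvisc} the function $V$ is a viscosity solution of (\ref{equaz}) on $\widetilde \dd$, and since (\ref{equaz}) is posed on $[0,T[\times\rr^2$, the relevant part $\widetilde\dd\cap([0,T[\times\rr^2)$ coincides with $\dd\setminus(\alpha\cup\beta\cup\gamma)$, giving the interior equation. The Dirichlet values on $\alpha$ and $\beta$ are supplied by Proposition~\ref{PrStrict1}; the one on $\gamma$ is immediate from the definition of $V$ at $t=T$. Continuity on the whole of $\dd$ is Proposition~\ref{PrStrict2}. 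Finally the quadratic bound (\ref{cpolswing}) follows by specializing the general estimate in (\ref{VVpol}): here $L(s,p,z,v)=(p-K)v$ and $\Phi\equiv 0$, so growth $k=1$ in $p$ suffices, while $z$ is uniformly bounded on $\dd$ between $m-\bar u T$ and $M$.

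For uniqueness I would run a standard doubling-of-variables comparison argument adapted to the unbounded domain and the polynomial growth. Let $V_1,V_2$ be continuous viscosity sub- and supersolutions of (\ref{equaz}) on $\dd\setminus(\alpha\cup\beta\cup\gamma)$ satisfying (\ref{HJB2swing}) and (\ref{cpolswing}), and argue by contradiction assuming $\sup_\dd(V_1-V_2)>0$. First I would introduce a quadratic penalty $\eta(1+|p|^2)$: by (\ref{cpolswing}), for $\eta>0$ small enough $V_1-V_2-2\eta(1+|p|^2)$ still has a strictly positive supremum, and the boundary conditions (\ref{HJB2swing}) force the supremum to be attained in the interior $\dd\setminus(\alpha\cup\beta\cup\gamma)$. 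Then I would double variables by maximizing, on $\dd\times\dd$,
\begin{equation*}
\Psi_\eps(t,p,z,s,q,y) = V_1(t,p,z)-V_2(s,q,y) - \tfrac{1}{\eps}\bigl(|p-q|^2+|z-y|^2+|t-s|^2\bigr) - \eta\bigl(2+|p|^2+|q|^2\bigr),
\end{equation*}
apply the Crandall--Ishii lemma to obtain matching second-order jets at the maximizer, insert them into the sub/supersolution inequalities for (\ref{equaz}), subtract, and use the Lipschitz continuity and linear growth of $f,\sigma$ from (\ref{fs})--(\ref{cpolfs}) to pass to the limit $\eps\to 0$ and then $\eta\to 0$, obtaining a contradiction. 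Uniqueness of $V$ follows.

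The main obstacle is precisely the combination of an unbounded domain in $p$ with only polynomial growth of the solutions, for which the classical bounded comparison results do not apply directly. The quadratic penalty $\eta(1+|p|^2)$ is tailored to dominate the growth in (\ref{cpolswing}); an equivalent route is the change of unknown $\widetilde V(t,p,z)=V(t,p,z)(1+|p|^2)^{-1}$, which turns (\ref{equaz}) into an equation with modified but still structurally good coefficients, and then to invoke the comparison principle of \cite[Thm.~3.1]{DaLioLey2} for bounded sub/supersolutions. Either way, the essential technical verification is that, after the penalization or the change of variable, the structural conditions needed for comparison hold uniformly in $p$ up to the lateral boundaries $\alpha$ and $\beta$.
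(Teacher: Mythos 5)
Your existence part matches the paper's: both collect Corollary~\ref{VVsolvisc}, Proposition~\ref{PrStrict1} for the lateral boundary values, the trivial terminal condition, Proposition~\ref{PrStrict2} for continuity, and the general estimate (\ref{VVpol}) for the quadratic growth. (Small slip: (\ref{lphiV}) requires $k>1$, so you cannot take $k=1$; the paper takes $k=2$, which is why (\ref{cpolswing}) is quadratic and not linear in $p$.)

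For uniqueness, however, you are missing the step the paper actually needs. The domain $\dd\setminus(\alpha\cup\beta\cup\gamma)$ is \emph{not} a product $[0,T[\times\rr\times I$: the lower boundary $\beta=\{z+\bar u(T-t)=m\}$ is a time-dependent, slanted surface, so the $z$-range shrinks as $t\to T$. A direct doubling of variables on $\dd\times\dd$ as you propose has to cope with maxima sitting on or drifting to this oblique boundary, and the usual rectangular machinery (and the result of \cite[Thm.~3.1]{DaLioLey2}, which is stated on a cylinder) does not apply off the shelf. The paper resolves this by the explicit change of variables $z'=(z-M)/(M-m+\bar u(T-t))+1$, which straightens $\dd$ into the cylinder $[0,T[\times\rr\times]0,1[$ (the lateral boundary becoming $\{z'=0\}\cup\{z'=1\}$) at the cost of an extra drift $-\bar u(z'-1)/(M-m+\bar u(T-t'))$ in the $z'$-equation; only then does a DaLio--Ley type comparison apply. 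Your proposal addresses the unbounded growth in $p$ (via the penalty $\eta(1+|p|^2)$ or the change of unknown $V(1+|p|^2)^{-1}$), but not this geometric straightening, and these are two distinct difficulties.

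A secondary issue in your penalization: since $V_1,V_2$ themselves satisfy $|V_i|\le\check C(1+|p|^2)$, the quantity $V_1-V_2-2\eta(1+|p|^2)$ need not be bounded above for $\eta$ \emph{small}; one would need $\eta$ comparable to $\check C$, or a penalty of strictly higher order in $|p|$ (e.g.\ $\eta(1+|p|^{2+\delta})$), to localize the supremum. Your alternative route via the rescaled unknown $\widetilde V=V(1+|p|^2)^{-1}$ avoids this, and combined with the paper's change of variables in $z$ would give a complete argument.
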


\begin{proof}
In this problem, $k=2$ in (\ref{VVpol}). Thus, by (\ref{VVpol}), Corollary \ref{VVsolvisc} and Proposition \ref{PrStrict1}, the function $V$ is viscosity a solution of problem (\ref{equaz})-(\ref{HJB2swing})-(\ref{cpolswing}). We now need a uniqueness result. By the following change of variables
\begin{equation*}
t'=t, \qquad p'=p, \qquad z'= \frac{z-M}{M-m+\bar u (T-t)}+1,
\end{equation*}
problem (\ref{equaz})-(\ref{HJB2swing}) becomes
\begin{multline*}
-V_{t'}(t',p',z') -\frac{\bar u (z'-1)}{M-m+\bar u (T-t')} V_{z'}(t',p',z') \\
+ r V(t',p',z') - f(t',p')V_{p'}(t',p',z') - \frac{1}{2}\sigma^2(t',p')V_{p'p'}(t',p',z') \\
+ \min_{v \in [0,\bar u]}\left[ -v \left(\frac{1}{M-m+\bar u (T-t')}V_{z'}(t',p',z') + p'-K\right)  \right]=0, \\
\forall (t',p',z') \in [0,T[ \times \rr \times ]0,1[,
\end{multline*}
with boundary condition
\begin{align*}
& V(T,p',z')=0, \quad\qquad \forall(p',z')\in \rr \times [0,1], \\
& V(t',p',1)=0, \quad\qquad \forall(t',p')\in [0,T] \times \rr, \\
& V(t',p',0)=\xi(t',p'), \quad\qquad \forall(t',p')\in [0,T] \times \rr.
\end{align*}
Notice that the polynomial growth is preserved and that now the domain is $[0,T[ \times \rr \times ]0,1[$. By arguing as in \cite{DaLioLey2}, one can prove that uniqueness holds for such a problem.
\end{proof}

The previous result generalizes an analogous result in \cite{BLN}, valid in the case $m = 0$. We now turn to prove some properties of the value function with respect to the variables $p$ and $z$. As for $V(t,\cdot,z)$, Propositions \ref{PrRegVinc1} and \ref{PrRegVinc2} hold.

\begin{prop}
\label{PrStrict4}
Let the assumptions of Section \ref{SWING} hold. Let $(t,z)\in[0,T]\times \rr$ be such that $(t,p,z) \in \dd$ for each $p \in \rr$. Then
\begin{itemize}
\item[-] the function $V(t,\cdot,z)$ is Lipschitz continuous, uniformly in $(t,z)$. Moreover, the derivative $V_p(t,p,z)$ exists for a.e.~$(t,p,z) \in \dd$ and we have $| V_p(t,p,z)|\leq M_1$, for some constant $M_1>0$ depending only on $T$, $\bar u$ and on the constants in (\ref{fs}), (\ref{cpolfs}) and (\ref{regvinc1}).
\item[-] if $f(s,\cdot),\sigma(s,\cdot) \in C^{2}_b(\rr)$, uniformly in $s \in [0,T]$, the function $V(t,\cdot,z)$ is locally semiconvex, uniformly in $t$, and a.e.~twice differentiable.
\end{itemize}
\end{prop}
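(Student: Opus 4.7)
The plan is simply to recognize that Proposition \ref{PrStrict4} is an immediate consequence of Propositions \ref{PrRegVinc1} and \ref{PrRegVinc2}, once the hypotheses of those general statements are checked for the specific data of the swing contract. Indeed, in the formulation of Section \ref{SWING} we have, in the notation of Section \ref{COV},
\begin{equation*}
L(t,p,z,v)=(p-K)v, \qquad \Phi(p,z)\equiv 0, \qquad g(t,v)=v, \qquad U=[0,\bar u],
\end{equation*}
and $f,\sigma$ do not depend on $v$. Under the assumption that $(t,p,z)\in\dd$ for every $p\in\rr$ (which is automatic whenever $m-\bar u(T-t)\le z\le M$, since the set $\dd$ does not depend on $p$), the set of admissible controls $\aav$ is genuinely independent of $p$, so the argument ``pass the estimate under the supremum over a common set of controls'' is available.

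For the first item, I would check hypothesis (\ref{regvinc1}): the estimate
\begin{equation*}
|L(t,p,z,v)-L(t,q,z,v)|=|v||p-q|\le \bar u|p-q|
\end{equation*}
holds with $\bar C=\bar u$, and $\Phi\equiv 0$ trivially satisfies the same bound. Hence Proposition \ref{PrRegVinc1} applies verbatim and yields the Lipschitz continuity of $V(t,\cdot,z)$ uniformly in $(t,z)$, the a.e.~existence of $V_p$, and the bound $|V_p(t,p,z)|\le M_1$ with $M_1$ depending only on $T$, $\bar u$, and the constants in (\ref{fs})--(\ref{cpolfs}).

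For the second item I would verify the hypotheses of Proposition \ref{PrRegVinc2}. Since $\Phi\equiv 0$, all assumptions on $\Phi$ are trivial. The map $L(t,\cdot,\cdot,v)=(p-K)v$ is affine in $(p,z)$, so $D_{(p,z)}L(t,p,z,v)=(v,0)$ and $D^2_{(p,z)}L\equiv 0$, both bounded uniformly (using $v\in[0,\bar u]$). The standing assumption $f(s,\cdot),\sigma(s,\cdot)\in C^2_b(\rr)$ uniformly in $s$ furnishes exactly the required uniform bounds on $D_pf$, $D^2_pf$, $D_p\sigma$, $D^2_p\sigma$. Therefore Proposition \ref{PrRegVinc2} yields local semiconvexity of $V(t,\cdot,z)$ uniformly in $t$, and a.e.~twice differentiability follows by Alexandrov's theorem.

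There is really no obstacle: the only point that requires a word of comment is the standing hypothesis $(t,p,z)\in\dd$ for every $p$, which ensures that the set of admissible controls is shared across different values of $p$, so that the classical ``pass-to-the-sup'' argument employed in the proofs of Propositions \ref{PrRegVinc1} and \ref{PrRegVinc2} goes through unchanged in the present constrained setting.
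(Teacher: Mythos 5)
Your proposal is correct and takes essentially the same route as the paper, which also proves the first item by invoking Proposition \ref{PrRegVinc1} (noting $p\mapsto(p-K)v$ is Lipschitz) and the second by invoking Proposition \ref{PrRegVinc2} (noting $p\mapsto(p-K)v$ is smooth with bounded derivatives). You simply spell out the hypothesis checks in more detail than the paper does.
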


\begin{proof}
The first part follows from Proposition \ref{PrRegVinc1} (notice that function $p \mapsto (p-K)v$ is Lipschitz continuous). As for the second item, it suffices to rewrite Proposition \ref{PrRegVinc2} (notice that the function $p \mapsto (p-K)v$ is of class $C^\infty(\rr)$, with bounded derivatives).
\end{proof}

Let us now consider the function $V(t,p, \cdot)$. Recall that its domain is $[m-\bar u (T-t),M]$.

\begin{prop}
\label{PrStrict5}
Let the assumptions of Section \ref{SWING} hold. For each $(t,p) \in [0,T]\times \rr$ the function $V(t,p,\cdot)$ is
\begin{itemize}
\item[-] concave, Lipschitz continuous and a.e.~twice differentiable;
\item[-] weakly increasing in $[m-(T-t)\bar u, M-(T-t)\bar u]$ and weakly decreasing in $[m,M]$. In particular, if $M-(T-t)\bar u \geq m$ then the function $V(t,p,\cdot)$ is constant in $[m, M-(T-t)\bar u]$ (they all are maximum points).
\end{itemize}
\end{prop}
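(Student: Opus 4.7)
The plan is to exploit two structural features of the swing problem: the cost functional $J(t,p,z;u)$ is linear in $u$ and, in fact, independent of $z$, so every dependence of $V(t,p,\cdot)$ on $z$ is funneled through the admissibility set $\mathcal{A}^{\textup{adm}}_{tz}$. With this lens, each of the three claims reduces to either a convex-combination argument (for concavity) or a set-inclusion argument (for monotonicity).

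To establish concavity, I would fix $z_1, z_2$ in the domain $[m-\bar u(T-t), M]$, pick $u_i \in \mathcal{A}^{\textup{adm}}_{tz_i}$, and set $\bar z = (z_1+z_2)/2$. The key observation is that
\[
Z^{t,\bar z;(u_1+u_2)/2}(T) \;=\; \tfrac{1}{2}\bigl(Z^{t,z_1;u_1}(T)+Z^{t,z_2;u_2}(T)\bigr) \;\in\; [m,M] \quad \ppp\text{-a.s.},
\]
so $(u_1+u_2)/2$ is admissible at $\bar z$; linearity of $J$ in $u$ (and independence of $z$) then gives $J(t,p,\bar z; (u_1+u_2)/2) = \tfrac{1}{2}(J(t,p,z_1;u_1)+J(t,p,z_2;u_2))$, and passing to suprema over $u_1,u_2$ produces the concavity inequality $V(t,p,\bar z) \geq \tfrac{1}{2}(V(t,p,z_1)+V(t,p,z_2))$. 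The Lipschitz and a.e.\ twice-differentiability statements would then follow by combining this concavity with the continuity of $V$ on $\dd$ (Proposition \ref{PrStrict2}): a continuous concave function on an interval is locally Lipschitz on the relative interior, and Alexandrov's theorem supplies a.e.\ second-order differentiability.

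For the monotonicity claim, I would verify two admissibility-set inclusions. First, if $m-(T-t)\bar u \leq z_1 \leq z_2 \leq M-(T-t)\bar u$ and $u \in \mathcal{A}^{\textup{adm}}_{tz_1}$, then $z_2 + \int_t^T u(s)\,ds \leq z_2 + \bar u(T-t) \leq M$ and $z_2 + \int_t^T u(s)\,ds \geq z_1 + \int_t^T u(s)\,ds \geq m$, hence $u \in \mathcal{A}^{\textup{adm}}_{tz_2}$; since $J$ is $z$-independent, this yields $V(t,p,z_1) \leq V(t,p,z_2)$. A symmetric argument gives the reverse inclusion $\mathcal{A}^{\textup{adm}}_{tz_2} \subseteq \mathcal{A}^{\textup{adm}}_{tz_1}$ when $m \leq z_1 \leq z_2 \leq M$ (the lower bound is then automatic from $u \geq 0$ and $z_1 \geq m$, while the upper bound only gets easier), and so $V(t,p,z_2) \leq V(t,p,z_1)$. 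On the overlap $[m, M-(T-t)\bar u]$ (when nonempty), $V(t,p,\cdot)$ is simultaneously weakly increasing and weakly decreasing, hence constant.

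No serious obstacle is expected: the argument is essentially algebraic once one notices that, unlike in the penalty case of Proposition \ref{PrPen3}, here $J$ does not depend on $z$ at all. This is what lets set inclusions between admissibility classes alone drive the monotonicity, with no auxiliary estimates needed, and it also turns the usual concavity inequality into an equality at the level of the mixed control.
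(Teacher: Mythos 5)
Your proof follows the paper's own argument essentially verbatim: the concavity claim rests on the same observation that the average control $(u_1+u_2)/2$ is admissible at the averaged initial volume and that $J$ is linear in $u$, while the monotonicity claim rests on the same two admissibility-set inclusions. The only cosmetic difference is that you invoke Proposition \ref{PrStrict2} for continuity before citing concavity and Alexandrov, whereas the paper appeals directly to the ``well-known'' local Lipschitz property of concave functions on a compact domain; both routes give local Lipschitzianity on the interior and, to be fully honest, neither rules out an infinite one-sided slope at the endpoints, so the shared looseness is in the original as well.
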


\begin{proof}
\textit{Item 1} (this is an adaptation of \cite[Prop.~3.4]{BLN}, which takes into account only an upper bound on $Z^{t,z;u}(T)$). Let $(t,p) \in [0,T]\times\rr$, $z_1,z_2\in [m-\bar u (T-t),M]$, $u_1 \in \mathcal{A}^{\text{adm}}_{t z_1}$ e $u_2 \in \mathcal{A}^{\text{adm}}_{t z_2}$. By (\ref{stima2}) the process $(u_1+u_2)/2$ belongs to the set of admissible controls for initial point $(t,(z_1+z_2)/2)$. By the linearity of the function $v \mapsto (P^{t,p}(s)-K)v$ we have
\begin{equation}
\label{regstr1}
\frac{J(t,p,z_1;u_1)+J(t,p,z_2;u_2)}{2} =  J \left(t,p,\frac{z_1+z_2}{2};\frac{u_1+u_2}{2}\right) \leq V\left(t,p,\frac{z_1+z_2}{2}\right).
\end{equation}
Since (\ref{regstr1}) holds for each $u_1 \in \mathcal{A}^{\text{adm}}_{t z_1}$ and $u_2 \in \mathcal{A}^{\text{adm}}_{t z_2}$, it follows that
\begin{equation*}
\frac{V(t,p,z_1)+V(t,p,z_2)}{2} \leq V\left(t,p,\frac{z_1+z_2}{2}\right),
\end{equation*}
which implies the concavity of the function $V(t,p,\cdot)$. Local Lipschitzianity is a well-known property of concave functions (and here the domain is a compact set), while the a.e.~existence of the second derivative follows from the Alexandrov theorem.

\textit{Item 2.} Let $(t,p) \in [0,T] \times \rr$. If $m \leq z_1 \leq z_2 \leq M$, it is easy to check that $\mathcal{A}^{\text{adm}}_{t z_2} \subseteq \mathcal{A}^{\text{adm}}_{t z_1}$, so that $ V(t,p,z_1)\geq V(t,p,z_2)$. Similarly, if $m-(T-t)\bar u \leq z_1 \leq z_2 \leq M-(T-t)\bar u$, we have $\mathcal{A}^{\text{adm}}_{t z_1} \subseteq \mathcal{A}^{\text{adm}}_{t z_2}$ and then $ V(t,p,z_2)\leq V(t,p,z_1)$. The second part immediately follows, since $[m-(T-t)\bar u, M-(T-t)\bar u] \cap [m,M]=[m, M-(T-t)\bar u]$.
\end{proof}

The monotonicity result in Proposition \ref{PrStrict5} is described in Figure \ref{PICcrescconstr}.
\begin{figure}[htb]
\centering
\includegraphics[width=0.25\columnwidth]{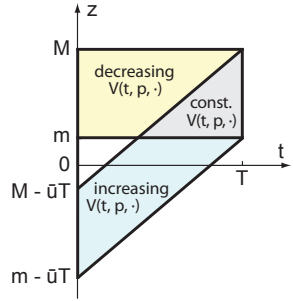}
\caption{monotonicity of $V(t,p,\cdot)$}
\label{PICcrescconstr}
\end{figure}

As in Section \ref{CAPopzioniswingpenal}, it was foreseeable that the function $V(t,p,\cdot)$ is constant in an interval: if $M-(T-t)\bar u \geq m$ and $z \in [m, M-(T-t)\bar u]$ then $\aav=\aaa$ (i.e.~all controls satisfies the constraint), which implies that the initial value $z$ does not influence the value function. This generalizes an intuitive result in \cite[Lemma 3.2]{BLN}: for $(t,z)$ such that the volume constraint is {\em de facto} absent, the value function $V$ does not depend on $z$.

Finally, also in this case Remark \ref{remark} holds: by Proposition \ref{PrStrict5} the candidate in (\ref{candidate}-\ref{candidate2}) is well-defined.

\section{Conclusions}

We characterize the value of swing contracts in continuous time as the unique viscosity solution of a Hamilton-Jacobi-Bellman equation with suitable boundary conditions. More in details, swings can be divided in two broad contract classes, those with penalties on the cumulated quantity of energy $Z(T)$ at the end $T$ of the contract, and those with strict constraints on the same quantity: usually these constraints and penalties are meant to make $Z(T)$ belong to an interval $[m,M]$ with $m > 0$ (in real contracts usually $m > 0.8M$, see \cite{Loland}). 

In Section 2 we treat the case of contracts with penalties, which results in a straightforward application of classical optimal control theory, and in that case only a terminal condition is needed. For swing contracts with penalties, we prove that their value is the unique viscosity solution of the HJB equation (\ref{HJBpenalt}), and that is Lipschitz both in $p$ (spot price of energy) as in $z$ (current cumulated quantity), with first weak derivatives with sublinear growth. We also prove that the value function is also concave with respect to $z$, weakly increasing for $z \leq M - (T-t)\bar u$, where $t$ is the current time and $\bar u$ is the maximum marginal energy that can be purchased, and weakly decreasing for $z \geq m$. In this, we extend and generalize previous results of \cite{BLN}, which were proved only for swing contracts with strict penalties. These results make the candidate optimal exercise policy in Equations (\ref{candidate}--\ref{candidate2}) well defined. 

Conversely, the case of contracts with strict constraints gives rise to a stochastic control problem with a nonstandard state constraint in $Z(T)$. In Section 3 we approach a suitable generalization of this problem by a penalty method: we consider a general constrained problem and approximate the value function with a sequence of value functions of appropriate unconstrained problems with a penalization term in the objective functional, showing that they converge uniformly on compact sets to the value function of the constrained problem. 

In Section 4 we come back to the case of swing contracts with strict constraints: in this case the penalty functions used in Section 3 turn out to be penalties of suitable swing contracts, so that we also have the economic interpretation that a swing contract with strict constraints can be approximated by swing contracts with suitable penalties. In this context we succeed in strengthening the results of Section 3, by characterizing the value function as the unique viscosity solution with polynomial growth of the HJB equation (\ref{HJBpenalt}) subject to the boundary conditions in Equation (\ref{HJB2swing}). As for the smoothness of the value function with respects to $p$ and $z$, we find exactly the same results as in Section 2, extending previous results of \cite{BLN} to the case $m > 0$. These results make the candidate optimal exercise policy in Equations (\ref{candidate}--\ref{candidate2}), i.e. the same as in Section 2, again well defined.


\begin{thebibliography}{11}

\bibitem{BBP} O.~Bardou, S.~Bouthemy, G.~Pagès, \textit{Optimal quantization for the pricing of swing options}, Appl. Math. Finance 16 (2009), no. 1-2, 183--217.

\bibitem{Barrera} C.~Barrera-Esteve, F.~Bergeret, C.~Dossal, E.~Gobet, A.~Meziou, R.~Munos, D.~Reboul-Salze, \textit{Numerical methods for the pricing of swing options: a stochastic control approach}, Methodol. Comput. Appl. Probab. 8 (2006), no. 4, 517--540.

\bibitem{BLN} F.~E.~Benth, J.~Lempa, T.~K.~Nilssen, \textit{On the optimal exercise of swing options in electricity markets}, The Journal of Energy Markets 4 (2012), no. 4,  1--27.

\bibitem{ct} R.~Carmona, N.~ Touzi, \textit{Optimal multiple stopping and valuation of swing options}, Math. Finance 18 (2008), no. 2, 239--268.

\bibitem{USER} M.~G.~Crandall, H.~Ishii, P.-L.~Lions, \textit{User's guide to viscosity solutions of second order partial differential equations}, Bull. Amer. Math. Soc. (N.S.) 27 (1992), no. 1, 1--67.

\bibitem{DaLioLey2} F.~Da Lio, O.~Ley, \textit{Convex Hamilton-Jacobi equations under superlinear growth conditions on data}, Appl. Math. Optim. 63 (2011), no. 3, 309--339.

 
\bibitem{EFRV} E.~Edoli, S.~Fiorenzani, S.~Ravelli, T.~Vargiolu, \textit{Modeling and valuing make-up clauses in gas swing contracts}, Energy Economics 35 (2013), 58--73.
 
\bibitem{EV} E.~Edoli, T.~Vargiolu, \textit{Pricing of gas swing contracts: a viscosity solution approach}, preprint 2013.

\bibitem{BLN2} M.~Eriksson, J.~Lempa, T.~K.~Nilssen, \textit{Swing options in commodity markets: A multidimensional L\'evy diffusion model}, preprint  2013.

\bibitem{FS} W.~H.~Fleming, H.~M.~Soner, \textit{Controlled Markov Processes and Viscosity Solutions, Second Edition}, Springer, New York, 2006.

\bibitem{JRT} P.~Jaillet, E.~I.~Ronn, S.~ Tompaidis, \textit{Valuation of Commodity-Based Swing Options}, Management Science 50 (2004), no. 7, 909--921.

\bibitem{KR} N.~V.~Krylov, \textit{Controlled Diffusion Processes}, Springer, New York, 1980.

\bibitem{Loland} A.~L\o land, O.~Lindqvist, {\em Valuation of commodity-based swing options: a survey}, note SAMBA/38/80, Norwegian Computing Center (2008), {\tt http://publications.nr.no/swingoptionsurvey.pdf}. 

\bibitem{MS} M.~Motta, C.~Sartori, \textit{Finite fuel problem in nonlinear singular stochastic control}, SIAM J. Control Optim. 46 (2007), no. 4,  1180--1210.

\bibitem{MS2} M.~Motta, C.~Sartori, \textit{Minimum time with bounded energy, minimum energy with bounded time}, SIAM J. Control Optim. 42 (2003), no. 3, 789--809.

\bibitem{so} P.~Soravia, \textit{Viscosity solutions and optimal control problems with integral constraints}, Systems Control Lett. 40 (2000), no. 5, 325--335.

\end{thebibliography}
\end{document}